\definecolor{verylight}{gray}{0.97}
\definecolor{light}{gray}{0.9}
\definecolor{medium}{gray}{0.85}
\definecolor{dark}{gray}{0.6}
 \def\NZQ{\mathbb}               
 \def\NN{{\NZQ N}}
 \def\Pc{{\mathcal P}}
 \def\Jc{{\mathcal J}}
 \def\G{{\mathcal G}}
  \def\Gc{{\mathcal G}}
  \def\Ac{{\mathcal A}}
 \def\ab{{\mathbf a}}
 \def\bb{{\mathbf b}}
 \def\xb{{\mathbf x}}
 \def\cb{{\mathbf c}}
 \def\db{{\mathbf d}}
 \def\eb{{\mathbf e}}
 \def\opn#1#2{\def#1{\operatorname{#2}}} 
 \opn\chara{char} \opn\length{\ell} \opn\pd{pd} \opn\rk{rk}
 \opn\projdim{proj\,dim} \opn\injdim{inj\,dim} \opn\rank{rank}
 \opn\depth{depth} \opn\grade{grade} \opn\height{height}
 \opn\embdim{emb\,dim} \opn\codim{codim}
 \opn\Tr{Tr} \opn\bigrank{big\,rank}
 \opn\superheight{superheight}\opn\lcm{lcm}
 \opn\trdeg{tr\,deg}
 \opn\reg{reg} \opn\lreg{lreg} \opn\ini{in} \opn\lpd{lpd}
 \opn\size{size} \opn\sdepth{sdepth}
 \opn\link{link}\opn\fdepth{fdepth}\opn\lex{lex}
 \opn\tr{tr}
 \opn\type{type}
 \opn\gap{gap}
 \opn\arithdeg{arith-deg}
 \opn\revlex{revlex}
 \opn\div{div} \opn\Div{Div} \opn\cl{cl} \opn\Cl{Cl}
 \opn\Spec{Spec} \opn\Supp{Supp} \opn\supp{supp} \opn\Sing{Sing}
 \opn\Ass{Ass} \opn\Min{Min}\opn\Mon{Mon}
 \opn\Ann{Ann} \opn\Rad{Rad} \opn\Soc{Soc}
 \opn\Im{Im} \opn\Ker{Ker} \opn\Coker{Coker} \opn\Am{Am}
 \opn\Hom{Hom} \opn\Tor{Tor} \opn\Ext{Ext} \opn\End{End}
 \opn\Aut{Aut} \opn\id{id}
 \opn\nat{nat}
 \opn\pff{pf}
 \opn\Pf{Pf} \opn\GL{GL} \opn\SL{SL} \opn\mod{mod} \opn\ord{ord}
 \opn\Gin{Gin} \opn\Hilb{Hilb}\opn\sort{sort}
 \opn\PF{PF}\opn\Ap{Ap}
 \opn\mult{mult}
 \opn\bight{bight}
 \opn\aff{aff}
 \opn\relint{relint} \opn\st{st}
 \opn\lk{lk} \opn\cn{cn} \opn\core{core} \opn\vol{vol}  \opn\inp{inp} \opn\nilpot{nilpot}
 \opn\link{link} \opn\star{star}\opn\lex{lex}\opn\set{set}
 \opn\width{wd}
 \opn\Fr{F}
 \opn\QF{QF}
 \opn\G{G}
 \opn\type{type}\opn\res{res}
 \opn\conv{conv}
 \opn\Deg{Deg}
 \opn\Sym{Sym}
 \opn\gr{gr}
 \def\pot#1#2{#1[\kern-0.28ex[#2]\kern-0.28ex]}
 \opn\dirlim{\underrightarrow{\lim}}
 \opn\inivlim{\underleftarrow{\lim}}
 \let\union=\cup
 \def\Implies{\ifmmode\Longrightarrow \else
         \unskip${}\Longrightarrow{}$\ignorespaces\fi}
 \def\implies{\ifmmode\Rightarrow \else
         \unskip${}\Rightarrow{}$\ignorespaces\fi}
 \def\iff{\ifmmode\Longleftrightarrow \else
         \unskip${}\Longleftrightarrow{}$\ignorespaces\fi}
 \newtheorem{Theorem}{Theorem}[section]
 \newtheorem{Lemma}[Theorem]{Lemma}
 \newtheorem{Corollary}[Theorem]{Corollary}
 \newtheorem{Remark}[Theorem]{Remark}
 \newtheorem{Example}[Theorem]{Example}
 \let\epsilon\varepsilon
 \let\kappa=\varkappa
 \def\qed{\ifhmode\textqed\fi
       \ifmmode\ifinner\quad\qedsymbol\else\dispqed\fi\fi}
 \def\textqed{\unskip\nobreak\penalty50
        \hskip2em\hbox{}\nobreak\hfil\qedsymbol
        \parfillskip=0pt \finalhyphendemerits=0}
 \def\dispqed{\rlap{\qquad\qedsymbol}}
 \opn\dis{dis}
 \def\pnt{{\raise0.5mm\hbox{\large\bf.}}}
 \opn\Lex{Lex}
\begin{document}

\title{Finite distributive lattices, polyominoes and ideals of K\"onig type}
\author{J\"urgen Herzog and Takayuki Hibi
}

\date{\today}

\address{J\"urgen Herzog, Fachbereich Mathematik, Universit\"at Duisburg-Essen, Campus Essen, 45117 Essen, Germany}
\email{juergen.herzog@uni-essen.de}
\address{Takayuki Hibi, Department of Pure and Applied Mathematics, Graduate School of Information Science and Technology,
Osaka University, Toyonaka, Osaka 560-0043, Japan}
\email{hibi@math.sci.osaka-u.ac.jp}

\thanks{The second author was partially supported by JSPS KAKENHI 19H00637.  
}

\subjclass[2020]{Primary 13P10; Secondary 05E40}

\keywords{finite distributive lattice, join-meet ideal, polyomino, ideal of K\"onig type, lexicographic order}

\begin{abstract}
Finite distributive lattices whose join-meet ideals are of K\"onig type  will be classified.  Furthermore, a class of polyominoes whose polyomino ideals are of K\"onig type will be studied.
\end{abstract}

\maketitle

\setcounter{tocdepth}{1}

\section*{Introduction}

Inspired by K\"onig's theorem in the classical graph theory, an ideal of K\"onig type is introduced in \cite{HHM}.  In the present paper, binomial ideals of K\"onig type arising from finite distributive lattices (\cite{EH},\cite{hibi87}), and those from polyominoes (\cite{Q}) will be studied.

\section{Ideals of K\"onig type
}
Recall from \cite{HHM} what is an ideal of K\"onig type.  Let $S = K[x_1, \ldots, x_n]$ denote the polynomial ring in $n$ variables over a field $K$ with each $\deg x_i = 1$ and let $I \subset S$ be a graded ideal of height $h$.  We say that $I$ is {\em of K\"onig type} if there exist (i) a sequence $f_1, \ldots, f_h$ of homogeneous polynomials which forms part of a minimal system of homogeneous generators of $I$ and (ii) a monomial order $<$ on $S$ for which ${\rm in}_<(f_1), \ldots, {\rm in}_<(f_h)$ is a regular sequence.  More precisely, we say that $I$ is of K\"onig type with respect to the sequence $f_1, \ldots, f_h$ and the monomial order $<$.

\section{Finite distributive lattices}
Let $P$ be a finite partially ordered set (poset, for short) with $|P|=d$ and $L = \Jc(P)$ the finite distributive lattice (\cite[pp.~156--159]{HHgtm}) consisting of poset ideals of $P$, ordered by inclusion.  In other words, $P$ is the subposet of $L$ consisting of join-irreducible elements of $L$.  A subset $\{a_{i_0}, a_{i_1}, \ldots, a_{i_q}\}$ of $L$ of the form
$
a_{i_0} < a_{i_1} < \cdots < a_{i_q}
$
is called a {\em chain} of $L$ of {\em length} $q$.  It follows that the length of every maximal chain of $L$ is equal to $d$.  The {\em rank} of $a \in L$ is the maximal length of chains of $L$ of the form
$
a_{i_0} < a_{i_1} < \cdots < a_{i_r} = a.
$
Let $\rank_L(a)$ denote the rank of $a \in L$.  Let $\rho_L(i)$ denote the number of $a \in L$ with $\rank_L(a) = i$, where $0 \leq i \leq d$.  Especially $\rho_L(0) = \rho_L(d) = 1$.  We say that $\xi \in L$ is an {\em apex} of $L$ if $\rank_L(\xi) = i$ and $\rho_L(i) = 1$.  In other words, $\xi \in L$ is an apex of $L$ if, for each $a \in L$ with $a \neq \xi$, one has either  $\xi < a$ or $\xi > a$.  In particular, the unique minimal element $0_L$ of $L$ and the unique maximal element $1_L$ of $L$ are apexes of $L$.  A finite distributive lattice $L \neq \{0_L, 1_L\}$ is called {\em simple} if there is no apex of $L$ except for $0_L$ and $1_L$.

Let $S = K[\{x_a\}_{a \in L}]$ denote the polynomial ring in $|L|$ variables over a field $K$ and, for $a\in L$ and $b \in L$ which are incomparable in $L$, one introduces the binomial
\[
f_{a,b} = x_a x_b - x_{a\wedge b} x_{a \vee b}.
\]
The binomial ideal $I_L$ which is generated by those $f_{a,b}$ for which $a\in L$ and $b \in L$ are incomparable in $L$ is introduced in \cite{hibi87} and is called the {\em join-meet ideal} of $L$ (\cite{EH}).
Let $\Gc_L$ be the set of binomials $f_{a,b}$, where $a\in L$ and $b \in L$ are incomparable in $L$.  It is shown \cite{hibi87} that $\Gc_L$ is a minimal system of homogeneous generators of $I_L$.  Furthermore, if $<_{\rm rev}$ is a reverse lexicographic order on $S$ for which $a <_{\rm rev} b$ if $\rank(a) < \rank(b)$, then $\Gc_L$ is the reduced Gr\"obner basis of $I_L$ with respect to $<_{\rm rev}$.

Since $S/I_L$ is Cohen--Macaulay with $\dim S/I_L = d + 1$, one has
\[
\height I_L = |L| - (d + 1).
\]
It then follows that if $I_L$ is of K\"onig type, then $2(|L| - (d + 1)) \leq |L|$, equivalently, $|L| \leq 2(d + 1)$. By an abuse of language, we say that a finite distributive lattice $L$ is of K\"onig type if its join-meet ideal $I_L$ is of K\"onig type with respect to a sequence $f_{a_1, b_1}, f_{a_2,b_2}, \ldots, f_{a_h, b_h}$, where $h = |L| - (d + 1)$, and a monomial order $<$ on $S$.

\begin{Lemma}
\label{Mozart}
Let $L' = \Jc(P')$ and $L'' = \Jc(P'')$ be finite distributive lattice.  Let $L = L' \bigoplus L''$ be the ordinal sum \cite[p.~246]{EC} of $L'$ and $L''$.  Then the finite distributive lattice $L$ is of K\"onig type if and only if each of $L'$ and $L''$ is of K\"onig type.
\end{Lemma}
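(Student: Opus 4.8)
The plan is to exploit that in the ordinal sum $L = L' \oplus L''$ the two blocks $L'$ and $L''$ interact only through the comparabilities putting all of $L'$ below all of $L''$, so that both the combinatorics and the algebra split along the blocks. First I would record the structural facts. Set $S' = K[\{x_a\}_{a\in L'}]$ and $S'' = K[\{x_a\}_{a\in L''}]$, so that $S = S' \otimes_K S''$ on the disjoint variable sets $\{x_a\}_{a\in L'}$ and $\{x_a\}_{a\in L''}$. Two elements of $L$ are incomparable in $L$ exactly when they belong to the same block and are incomparable there, and for such a pair their meet and join in $L$ are computed inside that block (each block is a sublattice of $L$); hence $\Gc_L = \Gc_{L'}\sqcup\Gc_{L''}$, so $I_L = I_{L'}S + I_{L''}S$ and, comparing degree-two components, $(I_L)_2 = (I_{L'})_2 \oplus (I_{L''})_2$, the two summands being supported on disjoint variables. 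Moreover a maximal chain of $L$ consists of a maximal chain of $L'$ from $0_L$ to $1_{L'}$, then the cover $1_{L'}\lessdot 0_{L''}$, then a maximal chain of $L''$; so if $d'$ and $d''$ denote the maximal chain lengths of $L'$ and $L''$, then $d = d' + d'' + 1$, and with $|L| = |L'| + |L''|$ this yields $h := \height I_L = (|L'| - d' - 1) + (|L''| - d'' - 1) = \height I_{L'} + \height I_{L''} =: h' + h''$. I shall also use the elementary fact that non-constant monomials of a polynomial ring form a regular sequence if and only if they have pairwise disjoint supports; in particular a sub-collection of such a regular sequence is one, and the union of two such regular sequences in disjoint variable sets is one.

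For the implication "$L'$, $L''$ of K\"onig type $\Rightarrow$ $L$ of K\"onig type", assume $L'$ is of K\"onig type with respect to $f'_1,\dots,f'_{h'}$ and a monomial order $<'$ on $S'$, and $L''$ with respect to $f''_1,\dots,f''_{h''}$ and $<''$ on $S''$. By the direct-sum decomposition, $f'_1,\dots,f'_{h'},f''_1,\dots,f''_{h''}$ are $K$-linearly independent in $(I_L)_2$, hence form part of a minimal system of homogeneous generators of $I_L$, and there are $h' + h'' = h$ of them. Choose a monomial order $<$ on $S$ whose restrictions to the monomials in the $S'$-variables and to those in the $S''$-variables are $<'$ and $<''$ respectively (for instance a block order). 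Since $f'_i$ involves only $S'$-variables, $\ini_<(f'_i) = \ini_{<'}(f'_i)$, and likewise $\ini_<(f''_j) = \ini_{<''}(f''_j)$; the monomials in the first group have pairwise disjoint supports among the $S'$-variables, those in the second among the $S''$-variables, and the two variable sets are disjoint, so these $h$ monomials form a regular sequence. Hence $I_L$ is of K\"onig type.

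For the converse, assume $L$ is of K\"onig type with respect to $g_1,\dots,g_h$ --- a $K$-linearly independent subset of $(I_L)_2$ --- and a monomial order $<$ on $S$. Using $(I_L)_2 = (I_{L'})_2\oplus(I_{L''})_2$, write $g_k = g'_k + g''_k$ with $g'_k\in(I_{L'})_2$, $g''_k\in(I_{L''})_2$; since every term of $g'_k$ is a monomial in the $S'$-variables and every term of $g''_k$ one in the $S''$-variables, $\ini_<(g_k)$ is supported entirely in one of the two variable sets. Let $A$ be the set of $k$ for which $\ini_<(g_k)$ is supported in the $S'$-variables, and $B$ its complement. For $k\in A$ one has $g'_k\neq 0$ (else $\ini_<(g_k)=\ini_<(g''_k)$ would lie in the $S''$-variables) and $\ini_<(g_k)=\ini_<(g'_k)\in\ini_<(I_{L'}S)$. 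The monomials $\{\ini_<(g_k)\}_{k\in A}$ have pairwise disjoint supports, all within the $S'$-variables, so they generate an ideal of height $|A|$; this ideal is contained in $\ini_<(I_{L'}S)$, whose height is $\height I_{L'} = h'$ (extending to $S$ and passing to an initial ideal both preserve height). Thus $|A|\le h'$, and symmetrically $|B|\le h''$, so $|A|+|B|=h=h'+h''$ forces $|A|=h'$ and $|B|=h''$. The monomials $\ini_<(g'_k)$ ($k\in A$) are pairwise distinct, hence the $g'_k$ ($k\in A$) are $K$-linearly independent, so they form part of a minimal system of homogeneous generators of $I_{L'}$; and, viewing the restriction of $<$ to $S'$ as a monomial order on $S'$, their $h' = \height I_{L'}$ initial monomials form a regular sequence there. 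So $I_{L'}$ is of K\"onig type, and by symmetry so is $I_{L''}$.

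The step I expect to be most delicate is the converse direction: "part of a minimal system of generators" of $I_L$ is an arbitrary $K$-linearly independent family in $(I_L)_2$, and this need not split as a disjoint union of such families for $I_{L'}$ and for $I_{L''}$. One must therefore project each $g_k$ onto the two blocks and check both that linear independence is inherited (because the $\ini_<(g_k)$, being pairwise coprime, are distinct) and that the numbers of generators landing in each block are exactly $h'$ and $h''$ (forced by $h = h' + h''$). The only other point needing care is the identity $d = d' + d'' + 1$ --- equivalently, that $0_{L''}$ is join-irreducible in $L$ --- but this is routine.
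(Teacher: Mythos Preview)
Your argument is correct, but it does more work than the paper's because you are proving a stronger statement than the paper actually needs. Just before the lemma, the paper adopts the convention that ``$L$ is of K\"onig type'' means $I_L$ is of K\"onig type \emph{with respect to a sequence of the specific binomials $f_{a_i,b_i}\in\Gc_L$}, not with respect to an arbitrary part of a minimal generating system. Under that convention the converse you single out as delicate becomes immediate: any incomparable pair in $L$ lies entirely in one block, so each $f_{a_i,b_i}$ witnessing K\"onig type for $L$ already lies in $\Gc_{L'}$ or in $\Gc_{L''}$, no projection $g_k=g_k'+g_k''$ is required, and the counting $|A|=h'$, $|B|=h''$ follows from the same height bound you invoke. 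The paper's proof is therefore essentially the two observations $\height I_L=\height I_{L'}+\height I_{L''}$ and $\Gc_L=\Gc_{L'}\sqcup\Gc_{L''}$.

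Your proof, by contrast, establishes the equivalence for the \emph{general} notion of K\"onig type, where the $g_k$ may be arbitrary linearly independent elements of $(I_L)_2$. The projection argument and the recovery of linear independence from pairwise coprime initial terms are the genuine extra content, and you handle them correctly (in particular, $\height\ini_<(I_{L'}S)=\height I_{L'}$ holds because in a polynomial ring over a field one always has $\height J=\dim S-\dim S/J$, and passing to an initial ideal preserves $\dim S/J$). So your route is longer but also more robust; the paper's is shorter because it has quietly restricted the class of admissible sequences.
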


\begin{proof}
One has  $L = \Jc(P)$, where $P = P' \bigoplus \{a\} \bigoplus P''$.  Let $|P'| = d'$ and $|P''| = d''$.  Since  $\height I_{L'} = |L'| - (d' + 1)$ and $\height I_{L''} = |L''| - (d'' + 1)$, it follows that
\[
\height I_L = |L| - (d' + d'' + 2) = \height I_{L'} + \height I_{L''}.
\]
Each binomial $f_{a, b} \in I_L$ belong to either $I_{L'}$ or $I_{L''}$.  It then follows that $L$ is of K\"onig type if and only if each of $L'$ and $L''$ is of K\"onig type, as required.
\end{proof}

A finite distributive lattice $L$ is {\em decomposable},  if $L$ is of the form $L = L' \bigoplus L''$, where each of $L'$ and $L''$ is a distributive lattice.  It follows that $L$ is non-decomposable if and only if, for apexes $\xi$ and $\xi'$ of $L$ with $\xi < \xi'$, one has $\rank(\xi') - \rank (\xi) \geq 2$.  Every finite simple distributive lattice is non-decomposable.

In general, we say that a finite simple distributive lattice $L$ is {\em quasi-thin} if $\rho_L(i) \leq 3$ for each $1 \leq i < d$.  When $L$ is quasi-thin, one introduces $\theta(L) = |\{ i : \rho_L(i) = 3\}|$.  A finite quasi-thin distributive lattice $L$ is called {\em thin} if $\theta(L) = 0$.

\begin{Lemma}
\label{Chopin}
Suppose that a finite simple distributive lattice $L$ is of K\"onig type.  Then $L$ is quasi-thin with $\theta(L) \leq 2$.
\end{Lemma}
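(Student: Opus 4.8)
The plan is to combine a dimension count with a structural analysis of the rank function of $L$. Since $I_L$ is of K\"onig type, its height $h = |L| - (d+1)$ satisfies $2h \le |L|$, hence $|L| \le 2(d+1)$, as already recorded above. Since $L$ is simple, $\rho_L(i) \ge 2$ for every $1 \le i \le d-1$: an index with $\rho_L(i)=1$ would make the unique rank-$i$ element an apex different from $0_L$ and $1_L$. Writing $|L| = 2 + \sum_{i=1}^{d-1}\rho_L(i)$ and combining the two inequalities gives
\[
\sum_{i=1}^{d-1}\bigl(\rho_L(i)-2\bigr)\ \le\ 2 .
\]
As the summands are nonnegative integers, either (a) $\rho_L(i)\le 3$ for all $i$, and then at most two indices attain the value $3$, so $L$ is quasi-thin with $\theta(L)\le 2$; or (b) there is a unique index $j$ with $\rho_L(j)=4$ while $\rho_L(i)=2$ for all other $1\le i\le d-1$. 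So it remains to exclude case (b).

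The boundary indices are easy: if $\rho_L(1)=4$, then $P$ has four minimal elements, so every one of the $\binom{4}{2}=6$ two-element subsets of them is a poset ideal and $\rho_L(2)\ge 6$, contradicting $\rho_L(2)\le 2$ (and when $d\le 2$, $P$ does not even have four elements). The case $j=d-1$ follows by passing to the dual poset. So assume $2\le j\le d-2$, whence $\rho_L(j-1)=\rho_L(j+1)=2$; write $u_1,u_2$ for the rank-$(j-1)$ elements, $v_1,\dots,v_4$ for the rank-$j$ elements, and $w_1,w_2$ for the rank-$(j+1)$ elements. The key is to show this profile forces a rigid local picture that no distributive lattice admits. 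In a distributive lattice, if an element is covered by distinct $v,v'$ then $v\vee v'$ covers both; and a rank-two distributive lattice has at most two atoms. Hence, if some $u_i$ had three upper covers, their three pairwise joins would be three distinct elements of rank $j+1$, contradicting $\rho_L(j+1)=2$; so each $u_i$ has at most two upper covers, and dually each $w_l$ at most two lower covers. Counting edges in the covering graphs between consecutive levels then forces each $v_k$ to have exactly one lower cover and exactly one upper cover, with the four $v_k$ split $2+2$ over $\{u_1,u_2\}$ and $2+2$ over $\{w_1,w_2\}$.

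Up to relabelling there are two configurations. If the two partitions cross, then each interval $[u_i,w_l]$ with $u_i<w_l$ has $w_l$ as its only coatom above a single atom, hence is a three-element chain; consequently $u_1,u_2<w_l$ for both $l$, so $u_1\vee u_2\le w_1\wedge w_2$. Comparing ranks ($w_1\ne w_2$ gives $\operatorname{rank}(w_1\wedge w_2)\le j$, and $u_1\ne u_2$ gives $\operatorname{rank}(u_1\vee u_2)\ge j$), we get that $u_1\vee u_2$ has rank $j$ and lies above both $u_1$ and $u_2$, impossible since each rank-$j$ element covers only one of the $u_i$. If the two partitions coincide, then each $v_k$, having a unique lower cover, is the principal poset ideal $\downarrow p_k$ of $P$ for some $p_k$ with $|\!\downarrow p_k| = j$; the pair over $u_1$ yields elements $a,b$ with common strict down-set $u_1$ (``twins''), and the pair over $u_2$ yields twins $c,e$ with common strict down-set $u_2$. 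Since the poset ideals of size $j$ containing $u_1$ are exactly $\downarrow a$ and $\downarrow b$, the set $P\setminus u_1$ has precisely $a$ and $b$ as minimal elements; as $c\notin u_1$, it follows that $c$ lies above $a$ or above $b$, so $\downarrow c$ strictly contains $\downarrow a$ or $\downarrow b$ — contradicting $|\!\downarrow c| = |\!\downarrow a| = |\!\downarrow b| = j$. This excludes (b), completing the proof.

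I expect the main obstacle to be the interior case $2\le j\le d-2$: first, squeezing out the rigid two-partition structure from the mere numerology $\rho_L(j-1)=2,\ \rho_L(j)=4,\ \rho_L(j+1)=2$ via the atom bound in rank-two distributive lattices, and second, handling the ``coinciding'' configuration by translating back into $P$ and exploiting the twin phenomenon. The dichotomy from the displayed inequality and the boundary cases are routine, and one should verify that $d\le 3$ is already covered by the boundary argument, so that no separate base case is required.
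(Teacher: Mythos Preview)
Your argument is correct. The initial numerology and the dichotomy into cases (a) and (b) match the paper exactly. Your exclusion of case (b), however, follows a genuinely different route.

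The paper disposes of the possibility $\rho_L(i_0)\ge 4$ by invoking the planar/non-planar dichotomy for distributive lattices from \cite{AHH}: in the planar case the width-$2$ poset $P$ of join-irreducibles is shown to contain two disjoint $3$-chains, forcing $\rho_L(i_0\pm1)\ge 3$; in the non-planar case a Boolean interval of rank at least $3$ produces a second index with $\rho_L\ge 4$. Either way one contradicts $\rho_L(i)=2$ for $i\ne i_0$. This is short but leans on the external structural characterisation of planarity.

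Your approach is entirely local and self-contained. You analyse the three consecutive levels $j-1,j,j+1$ directly, using only the diamond property of modular lattices, the fact that a rank-$2$ distributive lattice has at most two atoms, and the identification of join-irreducibles in $\Jc(P)$ with principal ideals. The edge count forcing each $v_k$ to have a unique upper and lower cover is the key step, and the subsequent split into the ``crossing'' and ``coinciding'' configurations is handled cleanly; the translation back to $P$ via the twin phenomenon in the coinciding case is a nice touch. Your treatment of the boundary indices $j=1$ and $j=d-1$ is also more explicit than the paper's. The trade-off is length: the paper's argument is a few lines once one accepts the planarity reference, whereas yours is a full page --- but it requires no outside input beyond the definition of $\Jc(P)$.
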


\begin{proof}
Let $L = \Jc(P)$ with $|P| = d$.  Since $L$ is simple and since $|L| \leq 2(d + 1)$, it follows that
\[
2\geq |L| - 2d = \sum_{i=1}^{d-1} (\rho_L(i) - 2).
\]
Suppose that there is $1 \leq i_0 < d$ with $\rho_L(i_0)\geq  4$.  Then $\rho_L({i}) = 2$ for $1 \leq i < d$ with $i \neq i_0$.  If $L$ is planar \cite[p.~436]{AHH}, then $P$ possesses the subposet $\{a_1, \ldots, a_6\}$ with the partial order $a_1 < a_2 < a_3$ and $a_4 < a_5 < a_6$.  Hence $\rho_L({i_0-1}) \geq 3$ and $\rho_L({i_0+1}) \geq 3$, a contradiction.  If $L$ is non-planar, then the boolean lattice \cite[Example 9.1.6 (a)]{HHgtm} on $[n] = \{1,\ldots,n\}$ with $n \geq 4$ is an interval of $L$.  Thus there is $1 \leq i'_0 < d$ with $i'_0 \neq i_0$ and $\rho_L({i'_0}) \geq 4$, a contradiction.  It follows that $L$ is quasi-thin.  Furthermore, since $\sum_{i=1}^{d-1} (\rho_L(i) - 2) \leq 2$, one has $\theta(L) \leq 2$.
\end{proof}

\section{Finite simple distributive lattices of K\"onig type}
Now, in the present section, a classification of finite simple distributive lattices of K\"onig type will be done.  Lemma \ref{Chopin} says that a finite simple distributive lattice $L$ of K\"onig type is quasi-thin with $\theta(L) \leq 2$.  Hence the first step our job is to classify finite simple quasi-thin distributive lattices $L$ with $\theta(L) \leq 2$.

First, suppose that $L$ is non-planar.  Then the boolean lattice on $\{1,2,3\}$ is an interval of $L$ and $\theta(L) = 2$.  Each of the finite distributive lattices of Figure $1$ is simple, non-planar and quasi-thin.
\begin{figure}[h]
\begin{center}
\includegraphics[scale=0.9]{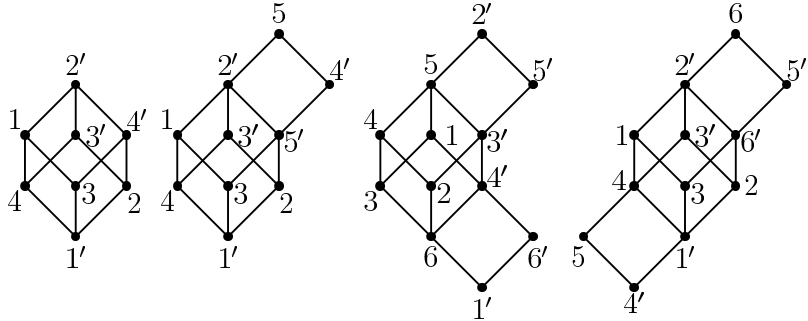}
\end{center}
\caption{Simple non-planar quasi-thin with $\theta(L)\leq 2$}
\label{horizontal}
\end{figure}

Second, suppose that $L$ is planar.  We divide finite simple planar quasi-thin distributive lattices $L$ with $\theta(L) \leq 2$ into $5$ classes as follows:
\begin{itemize}
\item
(type $0$) thin;
\item
(type $1$) quasi-thin with $\theta(L) = 1$;
\item
(type $2_a$) quasi-thin with $\theta(L) = 2$ and $\rho_L(i_0) = \rho_L(i_0 + 1) = 3$;
\item
(type $2_b$) quasi-thin with $\theta(L) = 2$ and $\rho_L(i_0) = \rho_L(i_0 + 2) = 3$;
\item
(type $2_c$) quasi-thin with $\theta(L) = 2$ and $\rho_L(i_0) = \rho_L(j_0) = 3$ with $j_0 \geq i_0 + 3$.
\end{itemize}
Let $L, L'$ and $L''$ denote the distributive lattices in Figure $2$ from the right to the left.  The interval $[h, a']$ of $L''$ is of type $0$.  Each of the intervals $[1', a']$ and $[h, 5']$ of $L''$ is of type $1$.  The distributive lattice $L$ is of type $2_a$ with $i_0 = 2$.  The distributive lattice $L'$ is of type $2_b$ with $i_0 = 2$.  The distributive lattice $L''$ is of type $2_c$ with $i_0 = 2$ and $j_0 = 13$.


\begin{figure}[h]
\begin{center}
\includegraphics[scale=0.9]{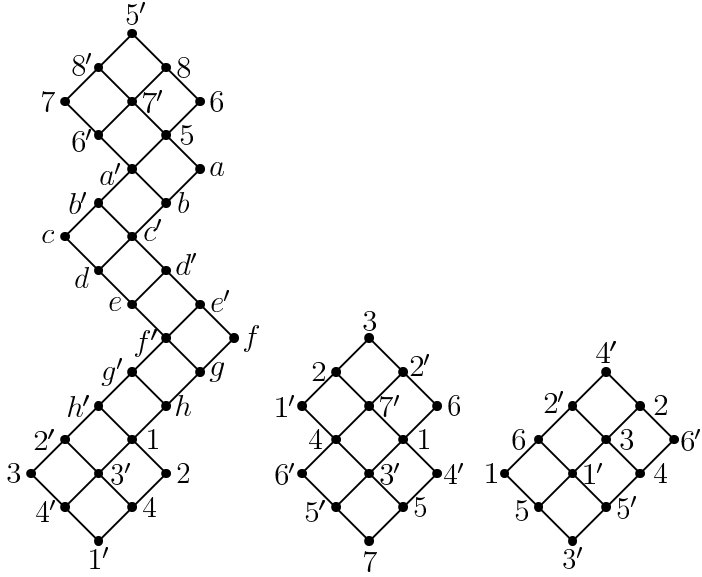}
\end{center}
\caption{Simple planar quasi-thin with $\theta(L)\leq 2$}
\label{long}
\end{figure}


\begin{Example}
\label{Beethoven}
{\em
Each of the distributive lattices of Figure $1$ is of K\"onig type with respect to the sequence
\begin{eqnarray}
\label{Boston}
x_{1}x_{1'} - x_{1 \wedge 1'}x_{1 \vee 1'}, x_{2}x_{2'} - x_{2 \wedge 2'}x_{2 \vee 2'}, \ldots
\end{eqnarray} and the lexicographic order induced by the ordering
\begin{eqnarray}
\label{Sydney}
x_1 > x_2 > \cdots > x_{1'} > x_{2'} > \cdots.
\end{eqnarray}
The distributive lattice $L''$ of Figure $2$ is of K\"onig type with respect to the sequence
\[
x_{1}x_{1'} - x_{1 \wedge 1'}x_{1 \vee 1'}, \ldots,
x_{a}x_{a'} - x_{a \wedge a'}x_{a \vee a'}, \ldots, x_{h}x_{h'} - x_{h \wedge h'}x_{h \vee h'}
\]
and the lexicographic order induced by the ordering
\[
x_a > \cdots > x_{h} > x_{1} > \cdots > x_{8} > x_{a'} > \cdots > x_{h'} > x_{1'} > \cdots > x_{8'}.
\]
Each of the distributive lattices $L$ and $L'$ of Figure $2$ is of K\"onig type with respect the sequence (\ref{Boston}) and the lexicographic order induced by the ordering (\ref{Sydney}).
}
\end{Example}

\begin{Lemma}
\label{Nightingale}
Let $L = \Jc(P)$ with $|P| = d$ be a finite distributive lattice of K\"onig type with respect to  $f_{a_1, b_1}, f_{a_2,b_2}, \ldots, f_{a_h, b_h}$ and a lexicographic order $<$ on $S$. Here $h = |L| - (d + 1)$.  Suppose that ${\rm in}_{<}(f_{a_1, b_1}) = x_{a_1}x_{b_1}, \, \rank_L(a_1) = \rank_L(b_1) = d - 1$ and that $a_1 \vee b_1$ is the unique maximal element $1_L$ of $L$.  Let $L'$ denote the finite distributive lattice which is obtained by adding new elements $c$ and $e$ to $L$, where
\[
a_1 \vee b_1 < e, \, \, \, b_1 < c < e, \, \, \, a_1 \vee b_1 \neq c.
\]
Then $L'$ is of K\"onig type with respect to  $f_{a_2, b_2}, f_{a_3,b_3}, \ldots, f_{a_h, b_h}, f', f''$, where
\[
f' = x_{a_1}x_{c} - x_{a_1 \wedge b_1}x_{e}, \, \, \, f'' = x_{b_1}x_{e} - x_{a_1 \vee b_1}x_{c}
\]
and a lexicographic order $<'$ on $S' = S[x_c, x_e]$ with
\[
{\rm in}_{<'}(f') = x_{a_1}x_{c}, \, \, \, {\rm in}_{<'}(f'') = x_{b_1}x_{e}.
\]
\end{Lemma}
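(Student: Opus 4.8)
The plan is to view $L$ as a sublattice of $L'$, to recognize $f'$ and $f''$ as (signed) join--meet binomials of $L'$, and to build $<'$ by extending the variable order underlying $<$. I would begin by recording the shape of $L'$. Since $a_1\vee b_1=1_L$ is the maximum of $L$ and $e$ lies above it, $e$ is the maximum of $L'$; moreover $c$ covers $b_1$ (which has rank $d-1$) and $e$ covers both $c$ and $1_L$, so every maximal chain of $L'$ has length $d+1$. Writing $L'=\Jc(P')$ this gives $|P'|=d+1$, hence $\height I_{L'}=|L'|-(d+2)=(|L|+2)-(d+2)=h+1$, which matches the length of the proposed generating sequence. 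A routine check of upper and lower bounds (everything $\le c$ is either $\le b_1$ or equal to $c$, and $e$ is the global maximum, so $c$ and $e$ never affect meets or joins of old elements) shows that $L$ is a sublattice of $L'$; consequently $f_{a_i,b_i}$, for $2\le i\le h$, is literally the same binomial in $\Gc_{L'}$ as in $\Gc_L$.

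Next I would compute, in $L'$, that $a_1\wedge c=a_1\wedge b_1$, $a_1\vee c=e$, $1_L\wedge c=b_1$ and $1_L\vee c=e$. Hence $f'=f_{a_1,c}$ and $f''=-f_{1_L,c}$ (using $x_{1_L}=x_{a_1\vee b_1}$); in particular both lie, up to sign, in $\Gc_{L'}$. Since $\{a_1,c\}$, $\{1_L,c\}$ and the pairs $\{a_i,b_i\}$ ($2\le i\le h$) are pairwise distinct incomparable pairs of $L'$, the list $f_{a_2,b_2},\dots,f_{a_h,b_h},f',f''$ consists of $h+1$ distinct elements of $\Gc_{L'}$ (up to sign), and as $\Gc_{L'}$ is a minimal system of homogeneous generators of $I_{L'}$, this list is part of a minimal system of homogeneous generators. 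That settles requirement (i) in the definition of K\"onig type.

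The order $<'$ will be the lexicographic order on $S'$ obtained by inserting $x_c$ and $x_e$ into the total order on the variables of $S$ that induces $<$; since the relative order of the old variables is unchanged, $<'$ restricts to $<$ on $S$, so ${\rm in}_{<'}(f_{a_i,b_i})={\rm in}_{<}(f_{a_i,b_i})$ for $2\le i\le h$. The hypothesis ${\rm in}_{<}(f_{a_1,b_1})=x_{a_1}x_{b_1}$ says exactly that $\max(x_{a_1},x_{b_1})$ exceeds both $x_{a_1\wedge b_1}$ and $x_{1_L}$ in the variable order (the four variables being distinct, the lex comparison of two disjoint squarefree quadratic monomials is decided by the larger variable occurring). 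I would then distinguish two cases. If $x_{a_1}>x_{b_1}$, make $x_c$ the smallest variable and insert $x_e$ strictly between $x_{1_L}$ and $x_{a_1}$; if $x_{b_1}>x_{a_1}$, make $x_e$ the smallest variable and insert $x_c$ strictly between $x_{a_1\wedge b_1}$ and $x_{b_1}$. In each case a one- or two-step lex comparison of the monomials of $f'$ and of $f''$ yields ${\rm in}_{<'}(f')=x_{a_1}x_c$ and ${\rm in}_{<'}(f'')=x_{b_1}x_e$. This construction is the one point that requires genuine care: one must see that the single hypothesis on ${\rm in}_<(f_{a_1,b_1})$ leaves exactly enough room to give the two new binomials their prescribed --- and mutually compatible --- initial terms.

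Finally I would verify that ${\rm in}_{<'}(f_{a_2,b_2}),\dots,{\rm in}_{<'}(f_{a_h,b_h}),x_{a_1}x_c,x_{b_1}x_e$ is a regular sequence, which for monomials is equivalent to pairwise coprimality. The monomials ${\rm in}_<(f_{a_1,b_1}),\dots,{\rm in}_<(f_{a_h,b_h})$, being a regular sequence of monomials, are pairwise coprime; hence for $i\ge 2$ the monomial ${\rm in}_<(f_{a_i,b_i})$ involves neither $x_{a_1}$ nor $x_{b_1}$ (and of course none of these involve $x_c$ or $x_e$), so each is coprime to both $x_{a_1}x_c$ and $x_{b_1}x_e$; and $x_{a_1}x_c$, $x_{b_1}x_e$ are coprime since $a_1,b_1,c,e$ are pairwise distinct. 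Thus the initial forms form a regular sequence, giving requirement (ii), and $L'$ is of K\"onig type with respect to $f_{a_2,b_2},\dots,f_{a_h,b_h},f',f''$ and $<'$. Apart from the construction of $<'$, every step is bookkeeping with poset ideals and with coprime monomials.
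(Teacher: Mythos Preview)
Your proof is correct and follows the same strategy as the paper's: extend the lexicographic variable order by inserting $x_c$ and $x_e$ in suitable positions, treating the cases $x_{a_1}>x_{b_1}$ and $x_{b_1}>x_{a_1}$ separately. Your insertion positions differ slightly from the paper's (which places both new variables immediately after $\max(x_{a_1},x_{b_1})$, as $x_{a_1}>x_e>x_c>\cdots$ or $x_{b_1}>x_c>x_e>\cdots$), and you spell out the height computation, the membership of $f',f''$ in $\Gc_{L'}$, and the coprimality check that the paper leaves implicit, but the argument is essentially the same.
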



\begin{figure}[h]
\begin{center}
\includegraphics[scale=0.9]{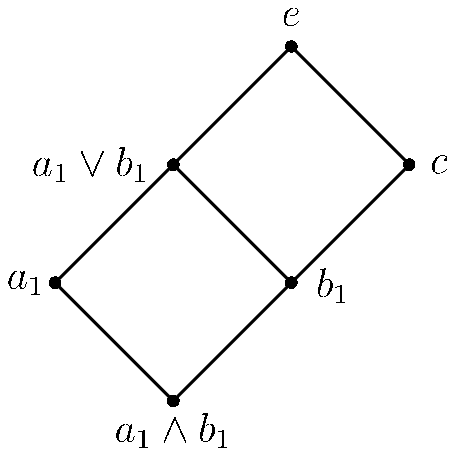}
\end{center}
\caption{$L'=L\union \{c,e\}$}
\label{long}
\end{figure}


\begin{proof}
If $x_{a_1}$ is bigger than each of $x_{b_1}, x_{a_1 \wedge b_1}, x_{a_1 \vee b_1}$ with respect to $<$ and if $<$ is the lexicographic order on $S$ induced by the ordering
$
\cdots > x_{a'} > x_{a_1} > x_{a''} > \cdots,
$
then the lexicographic order $<'$ on $S'$ induced by the ordering
\[
\cdots > x_{a'} > x_{a_1} > x_{e} > x_{c} > x_{a''} > \cdots
\]
satisfies ${\rm in}_{<'}(f') = x_{a_1}x_{c}$ and ${\rm in}_{<'}(f'') = x_{b_1}x_{e}$.  On the other hand, if $x_{b_1}$ be bigger than each of $x_{a_1}, x_{a_1 \wedge b_1}, x_{a_1 \vee b_1}$ and if $<$ is the lexicographic order on $S$ induced by the ordering
$
\cdots > x_{b'} > x_{b_1} > x_{b''} > \cdots,
$
then the lexicographic order $<'$ on $S'$ induced by the ordering
\[
\cdots > x_{b'} > x_{b_1} > x_{c} > x_{e} > x_{b''} > \cdots
\]
satisfies ${\rm in}_{<'}(f') = x_{a_1}x_{c}$ and ${\rm in}_{<'}(f'') = x_{b_1}x_{e}$.  Hence $L'$ is of K\"onig type with respect to $f_{a_2, b_2}, \ldots, f_{a_h, b_h}, f', f''$ and $<'$ on $S'$, as required.
\end{proof}

\begin{Lemma}
\label{Nightingale*}
Let $L = \Jc(P)$ with $|P| = d$ be a finite distributive lattice of K\"onig type with respect to  $f_{a_1, b_1}, f_{a_2,b_2}, \ldots, f_{a_h, b_h}$ and a lexicographic order $<$ on $S$. Here $h = |L| - (d + 1)$.   Suppose that ${\rm in}_{<}(f_{a_1, b_1}) = x_{a_1 \wedge b_1}x_{a_1 \vee b_1}, \, \rank_L(a_1) = \rank_L(b_1) = d - 1$ and that $a_1 \vee b_1$ is the  unique maximal element $1_L$ of $L$.  Let $L'$ denote the finite distributive lattice which is obtained by adding new elements $c$ and $e$ to $L$, where
\[
a_1 \vee b_1 < e, \, \, \, b_1 < c < e, \, \, \, a_1 \vee b_1 \neq c.
\]
Then $L'$ is of K\"onig type with respect to $f_{a_2, b_2}, f_{a_3,b_3}, \ldots, f_{a_h, b_h}, f', f''$, where
\[
f' = x_{a_1 \wedge b_1}x_{e} - x_{a_1}x_{c}, \, \, \, f'' = x_{a_1 \vee b_1}x_{c} - x_{b_1}x_{e}
\]
and a lexicographic order $<'$ on $S' = S[x_c, x_e]$ with
\[
{\rm in}_{<'}(f') = x_{a_1 \wedge b_1}x_{e}, \, \, \, {\rm in}_{<'}(f'') = x_{a_1 \vee b_1}x_{c}.
\]
\end{Lemma}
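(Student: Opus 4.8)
The plan is to follow the proof of Lemma~\ref{Nightingale} in the ``meet--join'' situation. The first task is to settle the combinatorics of $L'$. Since $c$ has the unique lower cover $b_1$ in $L'$, while the top element $1_{L'}=e$ covers the two incomparable elements $c$ and $a_1\vee b_1$, the join-irreducibles of $L'$ are exactly $P'=P\cup\{c\}$; hence $|P'|=d+1$ and $\height I_{L'}=|L'|-(d+2)=h+1$. Computing meets and joins in $L'$ gives $a_1\wedge c=a_1\wedge b_1$, $a_1\vee c=e$, $(a_1\vee b_1)\wedge c=b_1$ and $(a_1\vee b_1)\vee c=e$, so that $f'$ and $f''$ are, up to sign, the join-meet binomials $f_{a_1,c}$ and $f_{a_1\vee b_1,\,c}$ attached to the incomparable pairs $\{a_1,c\}$ and $\{a_1\vee b_1,c\}$ of $L'$. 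Moreover each pair $\{a_i,b_i\}$, $i\ge2$, remains incomparable in $L'$ with unchanged meet and join, since $c$ and $e$ lie below no element of $L$ and every common upper bound of $a_i$ and $b_i$ in $\{c,e\}$ already dominates $a_i\vee b_i$ computed in $L$. Consequently $f_{a_2,b_2},\dots,f_{a_h,b_h},f',f''$ is a subset of the minimal generating set $\Gc_{L'}$ of $I_{L'}$ of cardinality $h+1=\height I_{L'}$; so condition~(i) in the definition of K\"onig type holds, and only the monomial order remains to be produced.

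The point is that, $<$ being lexicographic, the equality ${\rm in}_<(f_{a_1,b_1})=x_{a_1\wedge b_1}x_{a_1\vee b_1}$ forces the largest of $x_{a_1},x_{b_1},x_{a_1\wedge b_1},x_{a_1\vee b_1}$ to be either $x_{a_1\vee b_1}$ or $x_{a_1\wedge b_1}$, and I would treat these two cases. If $x_{a_1\vee b_1}$ is the largest of the four and $<$ is induced by $\cdots>x'>x_{a_1\vee b_1}>x''>\cdots$, take $<'$ induced by $\cdots>x'>x_{a_1\vee b_1}>x_e>x_c>x''>\cdots$: in $f''=x_{a_1\vee b_1}x_c-x_{b_1}x_e$ the variable $x_{a_1\vee b_1}$ is still the largest variable occurring, so ${\rm in}_{<'}(f'')=x_{a_1\vee b_1}x_c$, and in $f'=x_{a_1\wedge b_1}x_e-x_{a_1}x_c$ the variable $x_e$ now exceeds each of $x_{a_1\wedge b_1},x_{a_1},x_c$, so ${\rm in}_{<'}(f')=x_{a_1\wedge b_1}x_e$. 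If instead $x_{a_1\wedge b_1}$ is the largest of the four and $<$ is induced by $\cdots>x'>x_{a_1\wedge b_1}>x''>\cdots$, take $<'$ induced by $\cdots>x'>x_{a_1\wedge b_1}>x_c>x_e>x''>\cdots$: then $x_c$ exceeds $x_{a_1\vee b_1},x_{b_1},x_e$, giving ${\rm in}_{<'}(f'')=x_{a_1\vee b_1}x_c$, while $x_{a_1\wedge b_1}$ still exceeds $x_e,x_{a_1},x_c$, giving ${\rm in}_{<'}(f')=x_{a_1\wedge b_1}x_e$.

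Finally I would check that these initial monomials form a regular sequence. As $x_c$ and $x_e$ occur in none of the $f_{a_i,b_i}$ with $i\ge2$, inserting them into the variable order leaves ${\rm in}_{<'}(f_{a_i,b_i})={\rm in}_<(f_{a_i,b_i})$ for $i\ge2$. The monomials ${\rm in}_<(f_{a_2,b_2}),\dots,{\rm in}_<(f_{a_h,b_h})$ are pairwise coprime, being part of the regular sequence attached to $L$, and since that regular sequence also contains ${\rm in}_<(f_{a_1,b_1})=x_{a_1\wedge b_1}x_{a_1\vee b_1}$, none of them involves $x_{a_1\wedge b_1}$ or $x_{a_1\vee b_1}$; together with $x_c,x_e$ being fresh variables this makes the enlarged list ${\rm in}_<(f_{a_2,b_2}),\dots,{\rm in}_<(f_{a_h,b_h}),x_{a_1\wedge b_1}x_e,x_{a_1\vee b_1}x_c$ pairwise coprime, hence a regular sequence. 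Therefore $L'$ is of K\"onig type with respect to $f_{a_2,b_2},\dots,f_{a_h,b_h},f',f''$ and $<'$. I expect the only delicate point to be the case analysis in the middle paragraph: a single insertion of $x_c,x_e$ must make $x_{a_1\wedge b_1}x_e$ win in $f'$ and at the same time make $x_{a_1\vee b_1}x_c$ win in $f''$, and the hypothesis on ${\rm in}_<(f_{a_1,b_1})$ is precisely what hands one of these two for free in each case while leaving enough room to arrange the other.
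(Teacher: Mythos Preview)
Your proof is correct and follows exactly the paper's approach: the paper's argument consists precisely of the two-case insertion you describe (place $x_e>x_c$ immediately after $x_{a_1\vee b_1}$ when that variable is largest, and $x_c>x_e$ immediately after $x_{a_1\wedge b_1}$ in the other case), just stated more tersely by reference to Lemma~\ref{Nightingale}. The surrounding verifications you add---that $L'$ has the right height, that $f',f''$ are genuine join-meet binomials of $L'$, and that the enlarged list of initial monomials is pairwise coprime---are details the paper leaves implicit, and they are all correct.
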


\begin{proof}
In the proof of Lemma \ref{Nightingale}, replacing the orderings with
\[
\cdots > x_{a'} > x_{a_1 \vee b_1} > x_{e} > x_{c} > x_{a''} > \cdots
\]
and
\[
\cdots > x_{b'} > x_{a_1 \wedge b_1} > x_{c} > x_{e} > x_{b''} > \cdots
\]
yields the desired result.
\end{proof}

\begin{Remark}
{\em
A dual version  of Lemmata \ref{Nightingale} and \ref{Nightingale*}, which can be easily obtained by replacing $1_L$ with $0_L$, is also valid with its dual proof.
}
\end{Remark}

\begin{Lemma}
\label{Chaikovskii}
A finite simple distributive lattice $L$ is of K\"onig type if and only if  $L$ is quasi-thin with $\theta(L) \leq 2$.
\end{Lemma}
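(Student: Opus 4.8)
The plan is to combine Lemma~\ref{Chopin} with the classification of finite simple quasi-thin distributive lattices $L$ with $\theta(L)\le 2$ recalled above --- the non-planar lattices of Figure~1 and the planar ones of the five types $0,1,2_a,2_b,2_c$ --- and, for each lattice on this list, to exhibit a subset $f_{a_1,b_1},\dots,f_{a_h,b_h}$ of $\Gc_L$ with $h=|L|-(d+1)$ together with a lexicographic order on $S$ for which the initial monomials $x_{a_i}x_{b_i}$ are pairwise coprime, hence a regular sequence. The implication ``of K\"onig type $\Rightarrow$ quasi-thin with $\theta(L)\le 2$'' is Lemma~\ref{Chopin}, so only the converse requires proof.

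If $L$ is non-planar, then the Boolean lattice on $\{1,2,3\}$ is an interval of $L$ and $\theta(L)=2$; using that $\Jc(P)$ is planar exactly when $P$ is a union of two chains, one checks that $L$ is of the shape exhibited in Figure~1, for which Example~\ref{Beethoven} already records a K\"onig-type presentation, uniformly in the lengths of the two ladders flanking the $B_3$-block.

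If $L$ is planar, I would argue by induction on $|L|$. Since $P$ then has at most two maximal and at most two minimal elements, $\rho_L(d-1)=\rho_L(1)=2$, so $L$ carries at its top a diamond $\{a_1\wedge b_1,a_1,b_1,1_L\}$ with $\rank_L(a_1)=\rank_L(b_1)=d-1$ and $a_1\vee b_1=1_L$, and dually at its bottom. Whenever one of these diamonds can be chosen so that deleting $1_L$ and one of $a_1,b_1$ (resp.\ the dual deletion) yields a strictly smaller simple planar quasi-thin lattice $\bar L$ with $\theta(\bar L)\le 2$, the induction hypothesis supplies a K\"onig-type presentation of $\bar L$; after arranging its first binomial to be $f_{a_1,b_1}$ with the initial term prescribed by Lemma~\ref{Nightingale} or Lemma~\ref{Nightingale*}, that lemma (or its dual, via the Remark following Lemma~\ref{Nightingale*}) returns a K\"onig-type presentation of $L$. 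The base of the induction consists of the lattices admitting no such reduction: the diamond $B_2$, the $3\times 3$ and $3\times 4$ grids, a finite list in type $2_b$, and a one-parameter family in type $2_c$ --- the one containing the lattice $L''$ of Figure~2 --- and for each of these one writes down the required generators and lexicographic order directly, exactly as in Example~\ref{Beethoven}.

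The step I expect to be the main obstacle is the bookkeeping that keeps this induction inside the list and aligned with the Nightingale moves. One must verify that a rung can always be stripped off one of the two ends without pushing a rank-$3$ level to position $1$ or $d-1$ (this dictates the choice of end, and isolates exactly the ``irreducible'' lattices named above as base cases), and one must check that the K\"onig-type presentation of $\bar L$ furnished by induction can always be chosen with its \emph{first} binomial of precisely the shape demanded in the hypotheses of Lemmata~\ref{Nightingale} and~\ref{Nightingale*}, so that the monomial order can be updated as in their proofs. The genuinely separate work is the family of type-$2_c$ base cases, where the two rank-$3$ levels are separated by a thin segment that no extremal move can shorten: there one must produce, uniformly in the length of that segment, the explicit pairing of the elements of $L$ and the explicit lexicographic order of which the presentation of $L''$ in Example~\ref{Beethoven} is a single instance.
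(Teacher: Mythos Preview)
Your proposal is correct and follows essentially the same route as the paper: Lemma~\ref{Chopin} for the necessity, Example~\ref{Beethoven} for the base cases, and repeated application of Lemmata~\ref{Nightingale} and~\ref{Nightingale*} (together with their dual versions) for the inductive extension. The paper's own proof is a two-sentence sketch that does not spell out the bookkeeping you flag --- in particular the alignment of the first binomial with the Nightingale hypotheses, and the uniform treatment of the middle thin segment in type~$2_c$ --- so your more detailed outline is, if anything, a fuller account of what the paper asserts.
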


\begin{proof}
The ``only if'' part is known (Lemma \ref{Chopin}).  On the other hand, the observation made in Example \ref{Beethoven} together with repeated application of the technique introduced in  Lemma \ref{Nightingale} and Lemma \ref{Nightingale*} guarantees that every finite quasi-thin distributive lattice with $\theta(L) \leq 2$ is of K\"onig type, as desired.
\end{proof}

\section{Classification of distributive lattices of K\"onig type}
Let $L = \Jc(P)$ with $|P| \geq 2$ be a finite non-decomposable distributive lattice and
\[
0_L = \xi_0 < \xi_1 < \cdots < \xi_s = 1_L
\]
the apexes of $L$.  Let $L_i = [\xi_{i-1}, \xi_i] = \Jc(P_i)$ with $|P_i| = d_i \geq 2$, where $1 \leq i \leq s$.  It then follows that $P = P_1 \bigoplus \cdots \bigoplus P_s$ and $d = d_1 + \cdots + d_s$.

\begin{Theorem}
\label{Johann_Sebastian_Bach}
Let $L$ be a finite non-decomposable distributive lattice and
\[
0_L = \xi_0 < \xi_1 < \cdots < \xi_s = 1_L
\]
the apexes of $L$.  Let $L_i = [\xi_{i-1}, \xi_i]$, where $1 \leq i \leq s$.  Then $L$ is of K\"onig type if and only if the following conditions are satisfied:
\begin{itemize}
\item
Each $L_i$ is quasi-thin with $\theta(L_i) \leq 2$.
\item
If $\theta(L_i) = 2$ and $\theta(L_{i'}) = 2$ with $1 \leq i < i' \leq s$, then there is $i < j < i'$ with $\theta(L_{j}) = 0$.
\end{itemize}
\end{Theorem}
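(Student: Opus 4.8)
The plan is to reduce the statement to a block‑by‑block analysis. Since $L$ is non‑decomposable, consecutive apexes differ in rank by at least two, and writing $L_i=\Jc(P_i)=[\xi_{i-1},\xi_i]$ one has $P=P_1\bigoplus\cdots\bigoplus P_s$, $d=\sum_i d_i$ and $\xi_i=1_{L_i}=0_{L_{i+1}}$, with $L_i\cap L_{i+1}=\{\xi_i\}$ and $L_i\cap L_j=\emptyset$ when $j\geq i+2$. Each $L_i$ is simple (an apex of $L$ interior to $[\xi_{i-1},\xi_i]$ would be one of $\xi_{i-1},\xi_i$). Thus $|L|=\sum_i|L_i|-(s-1)$ and
\[
\height I_L=|L|-(d+1)=\sum_{i=1}^{s}\bigl(|L_i|-(d_i+1)\bigr)=\sum_{i=1}^{s}\height I_{L_i}.
\]
Moreover every incomparable pair of $L$ lies in a unique block, so $\Gc_L=\bigcup_{i}\Gc_{L_i}$ is a disjoint decomposition of the minimal generators of $I_L$, each $f\in\Gc_{L_i}$ involving only the variables $\{x_a:a\in L_i\}$; write $h_i=\height I_{L_i}$, so $h=\sum_i h_i$.

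For the ``only if'' part, suppose $I_L$ is of K\"onig type with respect to $g_1,\dots,g_h\in\Gc_L$ and a monomial order $<$, and let $h_i'=\#\{k:g_k\in\Gc_{L_i}\}$. A regular sequence of monomials has pairwise disjoint supports, so the subfamily $\{g_k:g_k\in\Gc_{L_i}\}$ again has initial monomials with pairwise disjoint supports and is therefore a regular sequence of $h_i'$ elements of $I_{L_i}$; hence $h_i'\leq h_i$, and since $\sum_i h_i'=h=\sum_i h_i$ we get $h_i'=h_i$ for all $i$. So each $L_i$ is of K\"onig type and, by Lemma~\ref{Chaikovskii}, quasi‑thin with $\theta(L_i)\leq2$; this is the first condition. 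Then $|L_i|=2d_i+\theta(L_i)$, so $h_i=d_i+\theta(L_i)-1$, and the $2h_i=2d_i+2\theta(L_i)-2$ variables occurring in $\{{\rm in}_<(g_k):g_k\in\Gc_{L_i}\}$ are distinct among the $2d_i+\theta(L_i)$ variables of $L_i$; hence exactly $2-\theta(L_i)$ of the latter are missing. Letting $\beta_i,\tau_i\in\{0,1\}$ record whether $x_{\xi_{i-1}}$, respectively $x_{\xi_i}$, occurs among those initial monomials, we get $\beta_i=\tau_i=1$ if $\theta(L_i)=2$, $\beta_i\vee\tau_i=1$ if $\theta(L_i)=1$, and $\tau_i\beta_{i+1}=0$ for $1\leq i\leq s-1$ (the blocks $L_i,L_{i+1}$ share only $x_{\xi_i}$). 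If now $\theta(L_i)=\theta(L_{i'})=2$ with $i<i'$ and $\theta(L_j)\neq 0$ for all $i<j<i'$, then $\tau_i=1$ forces $\beta_{i+1}=0$, which is impossible if $\theta(L_{i+1})=2$ and forces $\tau_{i+1}=1$ if $\theta(L_{i+1})=1$; iterating along $i+1,\dots,i'-1$ gives $\tau_{i'-1}=1$, hence $\beta_{i'}=0$, contradicting $\theta(L_{i'})=2$. This proves the second condition.

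For the ``if'' part, by Lemma~\ref{Chaikovskii} each $L_i$ is of K\"onig type, and I would first record, from the explicit presentations behind that lemma (Example~\ref{Beethoven} and Lemmata~\ref{Nightingale}, \ref{Nightingale*}) together with the order‑reversing identification $L^{\mathrm{op}}=\Jc(P^{\mathrm{op}})$, that a thin block admits a lexicographic K\"onig presentation whose initial monomials avoid both $x_{0_{L_i}}$ and $x_{1_{L_i}}$, that a $\theta=1$ block admits one avoiding whichever single one of $x_{0_{L_i}}, x_{1_{L_i}}$ we prescribe, and that for $\theta=2$ every K\"onig presentation uses both. The interleaving condition is exactly what makes a greedy left‑to‑right choice of ``apex behaviour'' $(\beta_i,\tau_i)$ succeed: take a thin block with $(\beta_i,\tau_i)=(0,0)$, a $\theta=2$ block with $(1,1)$ (consistent unless $\beta_i$ is already forced to $0$ by $\tau_{i-1}=1$), and a $\theta=1$ block with $(1,0)$, or with $(0,1)$ when $\beta_i$ is forced to $0$; a failure can only occur at a $\theta=2$ block whose $\beta_i$ is forced to $0$, and tracing this forcing back through the (necessarily $\theta=1$) preceding blocks reaches an earlier $\theta=2$ block, which the hypothesis forbids. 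Having fixed such $(\beta_i,\tau_i)$ with $\tau_i\beta_{i+1}=0$, I would pick for each block a lexicographic K\"onig presentation with that apex behaviour and merge the orders on the $\{x_a:a\in L_i\}$ into one order on $\{x_a:a\in L\}$: since for each $i$ at most one of $L_i,L_{i+1}$ uses $x_{\xi_i}$ in its initial monomials, $x_{\xi_i}$ can be placed so as to preserve the presentation of the block that uses it while remaining harmless for the other, the merge being consistent because distinct blocks share at most one variable. The union of the $h=\sum_i h_i$ chosen binomials is then part of a minimal generating set of $I_L$ with pairwise‑disjoint‑support initial monomials, hence a regular sequence, so $L$ is of K\"onig type.

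The ``only if'' direction is, in essence, a counting argument once one has the block decomposition and the fact that regular sequences of monomials have disjoint supports. The genuine work --- and the main obstacle --- is the ``if'' direction: establishing that a block with $\theta\le1$ has a K\"onig presentation with prescribed behaviour on its two extreme apexes, and checking that the per‑block lexicographic orders glue to a single monomial order realizing all of them at once. The interleaving hypothesis is precisely the combinatorial condition under which such a compatible choice exists. (One could alternatively try to build $L$ up from small pieces by the moves of Lemmata~\ref{Nightingale} and \ref{Nightingale*}, but the apex variables would still have to be tracked, so this does not really bypass the difficulty.)
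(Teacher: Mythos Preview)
Your proof is correct and follows essentially the same route as the paper's: the ``only if'' direction is the same disjoint--support count (the paper packages your $\beta_i,\tau_i$ bookkeeping into the single inequality $\sum_{i=i_0}^{i_0'} 2g_i\leq |L'|$, where $L'=[\xi_{i_0-1},\xi_{i_0'}]$), and the ``if'' direction rests on the very facts (i) and (ii) you isolate about apex behaviour in thin and $\theta=1$ blocks. Your greedy assignment and your discussion of gluing the per-block lexicographic orders are in fact more explicit than the paper, which simply asserts that the conclusion ``turns out to be true'' from (i), (ii) and $h=\sum_i h_i$.
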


\begin{proof}
{\bf (``only if'')}
Suppose that a finite non-decomposable distributive lattice $L$ is of K\"onig type with respect to a sequence $f_{a_1, b_1}, \ldots, f_{a_h, b_h}$ and a monomial order $<$ on $S = K[\{x_a\}_{a \in L}]$, where $L= \Jc(P)$ with $|P| = d \geq 2$ and $h = |L| - (d + 1)$.  Let $L_i = \Jc(P_i)$ with $|P_i| = d_i \geq 2$.  
Let $u_j = {\rm in}_<(f_{a_{j},b_{j}})$.  Let $u_{j_1}, \ldots, u_{j_{g_i}}$ belong to $S_i = K[\{x_a\}_{a \in L_i}]$.  It then follows that
\begin{eqnarray*}
g_i \leq \grade({\rm in}_<(I_{L_i})) & = & \height({\rm in}_<(I_{L_i})) \\ & \leq & |L_i| - \dim(S/{\rm in}_<(I_{L_i})) \\
& = & |L_i| - \dim(S/I_{L_i}) \\
& = & |L_i| - (d_i + 1).
\end{eqnarray*}
Hence
\begin{eqnarray*}
h = \sum_{i=1}^{s} g_i & \leq & \sum_{i=1}^{s}(|L_i| - (d_i + 1)) \\
& = & |L| + (s - 1) - d - s \\
& = & |L| - (d + 1).
\end{eqnarray*}
Since $h = |L| - (d + 1)$, one has $g_i = |L_i| - (d_i + 1)$ for each $1 \leq i \leq s$.  Thus each $L_i$ is of K\"onig type.  In other words, each $L_i$ is quasi-thin with $\theta(L_i) \leq 2$.

Now, suppose that $\theta(L_{i_0}) = 2$ and $\theta(L_{i_0'}) = 2$ with $1 \leq i_0 < i_0' \leq s$ and that $\theta(L_{j}) = 1$ for each $i_0 < j < i_0'$.
Since $g_i = |L_i| - (d_i + 1)$, it follows that
\begin{eqnarray*}
\sum_{i=i_0}^{i_0'} 2 g_i & = &  \sum_{i=i_0}^{i_0'}|L_i| - (i'_0 - i_0 - 1) \\
& = & |L'| + (i_0' - i_0) - (i'_0 - i_0 - 1) \\
& = & |L'| + 1.
\end{eqnarray*}
On the other hand, one has $\sum_{i=i_0}^{i_0'} 2 g_i \leq |L'|$, a contradiction.

\medskip

\noindent
{\bf (``if'')}  Lemma \ref{Chaikovskii} says that each of the simple distributive lattices $L_i$ is of K\"onig type with respect to a sequence $f_{a^{(i)}_1, b^{(i)}_1}, \ldots, f_{a^{(i)}_{h_i}, b^{(i)}_{h_i}}$, where $h_i = |L_i| - (d_i + 1)$, and a monomial order $<^{(i)}$ on $S_i = K[\{x_a\}_{a \in L_i}]$.  Let ${\rm in}_{<^{(i)}}(f_{a^{(i)}_j, b^{(i)}_j}) = x_{c^{(i)}_j} x_{e^{(i)}_j}$ for each $1 \leq i \leq s$ and $1 \leq j \leq h_i$.  Furthermore, set $\Ac_i = \{c^{(i)}_j, e^{(i)}_j : 1 \leq j \leq h_i\}$ for each $1 \leq i \leq s$.  The crucial facts observed in Example \ref{Beethoven} are as follows:
\begin{itemize}
\item[(i)]
If $\theta(L_i) = 0$, then one can make neither $\xi_{i-1}$ nor $\xi_{i}$ belongs to $\Ac_i$;
\item[(ii)]
Let $\theta(L_i) = 1$.  Then there exists a lexicographic order $<_*$ on $S_i$ for which $\xi_{i-1} \in \Ac_i$ and $\xi_{i} \not\in \Ac_i$.  Furthermore, there exists a lexicographic order $<^*$ on $S_i$ for which $\xi_{i-1} \not\in \Ac_i$ and $\xi_{i} \in \Ac_i$.
\end{itemize}
Now, it follows from these facts (i) and (ii) together with $h = \sum_{i=1}^{s} h_i$ that ``if'' part turns out to be true.
\end{proof}

\begin{figure}[h]
\begin{center}
\includegraphics[scale=0.9]{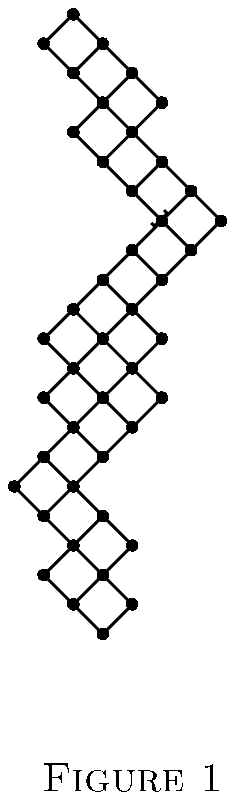}
\end{center}
\caption{Repeated application of Lemmata~\ref{Nightingale} and \ref{Nightingale*}}
\label{long}
\end{figure}

\section{Polyominoes and polyomino ideals}
Recall from \cite{Q} fundamental materials on polyominoes and their binomial ideals.  One regards $\NN^2$ as a infinite poset with the natural partial order defined by setting $(i, j) \leq (i', j')$ if $i \leq i'$ and $j \leq j'$.  Let $\ab, \bb \in \NN^2$ with $\ab \leq \bb$.  Then the set $[\ab, \bb] = \{ \cb \in \NN^2 : \ab \leq \cb \leq \bb \}$ is an interval of $\NN^2$.  If $\ab =(i,j), \bb = (i',j')$ with $i < i'$ and $j < j'$, then the interval $[\ab, \bb]$ is called {\em proper}.  The {\em corners} of the proper interval $[\ab, \bb]$ are $\ab, \bb$ and $\cb = (i', j), \db = (i, j')$.  We say that $\ab$ and $\bb$ are the diagonal corners and that $\cb$ and $\db$ are anti-diagonal corners of $[\ab, \bb]$.
The interval $C = [\ab, \bb]$ with $\bb = \ab + (1,1)$ is called a {\em cell} of $\NN^2$.  Let $\cb, \db$ be anti-diagonal corners of the cell $C = [\ab, \bb]$.  The set of {\em vertices} of $C$ is $V(C) = \{\ab, \bb, \cb, \db\}$ and the set of {\em edges} of $C$ is $E(C) = \{ \{\ab, \cb \}, \{\ab, \db\}, \{\bb, \cb\}, \{\bb, \db\} \}$.
Let $[\ab, \bb]$ be a proper interval of $\NN^2$.  A cell $C = [\ab', \bb']$ of $\NN^2$ is called a cell of $[\ab, \bb]$ if $\ab \leq \ab'$ and $\bb' \leq \bb$.

Let $\Pc$ be a finite collection of cells of $\NN^2$.  The vertex set of $\Pc$ is $V(\Pc) = \cup_{C \in \Pc} V(C)$ and the edge set of $\Pc$ is $E(C) = \cup_{C \in \Pc} E(C)$.  A vertex $\ab \in V(\Pc)$ is called an {\em interior vertex} of $\Pc$ if $\ab$ is a vertex of four distinct cells of $\Pc$, otherwise it is called a {\em boundary vertex} of $\Pc$.  Let $C$ and $D$ be two cells of $\Pc$.  Then $C$ and $D$ are {\em connected} in $\Pc$ if there is a sequence of cells of $\Pc$ of the form $C = C_1, \ldots, C_m = D$ for which $C_i \cap C_{i+1}$ is an edge of $C_i$ for $i = 1, \ldots, m - 1$.  A {\em polyomino} is a finite collection $\Pc$ of cells of $\NN^2$ for which any two cells of $\Pc$ is connected in $\Pc$.

Let $\Pc$ be a polyomino and $S = K[\{x_\ab\}_{\ab \in V(\Pc)}]$ the polynomial ring in $|V(\Pc)|$ variables over a field $K$.  A proper interval $[\ab, \bb]$ of $\NN^2$ is called an {\em inner interval} of $\Pc$ if each cell of $[\ab, \bb]$ belongs to $\Pc$.  Now, for each inner interval $[\ab, \bb]$ of $\Pc$, one introduces the binomial $f_{\ab,\bb} = x_{\ab}x_{\bb}-x_{\cb}x_{\db}$, where $\cb$ and $\db$ are the anti-diagonal corners of $[\ab, \bb]$  The binomial $f_{\ab,\bb}$ is called an {\em inner $2$-minor} of $\Pc$.  The {\em polyomino ideal} of $\Pc$ is the binomial ideal $I_\Pc$ which is generated by the inner $2$-minors of $\Pc$.  Furthermore, we write $K[\Pc]$ for the quotient ring $S/I_\Pc$.  We say that $\Pc$ is of K\"onig type if $I_\Pc$ is of K\"onig type with respect to a sequence $f_{\ab_1,\bb_1}, \ldots, f_{\ab_h,\bb_h}$ of inner $2$-minors, where $h = \height I_\Pc$, and a monomial order $<$ on $S$.

In order to study of polyominoes of K\"onig type, to find a combinatorial formula to compute $\height I_\Pc$ is indispensable.

\begin{Theorem}
\label{hotel_herzog}
Let $\Pc$ be a polyomino for which each $x_{ij}$ with $(i, j) \in V(\Pc)$ is a non-zero divisor modulo $I_\Pc$.  Then $\height I_\Pc \leq |\Pc|$.  In particular, when $I_\Pc$ is prime, one has $\height I_\Pc \leq |\Pc|$.
\end{Theorem}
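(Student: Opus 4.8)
The plan is to bound $\dim K[\Pc]$ from below, since $\height I_\Pc = |V(\Pc)| - \dim K[\Pc]$, so the assertion is equivalent to $\dim K[\Pc] \geq |V(\Pc)| - |\Pc|$. First I would exploit the hypothesis: if every $x_\ab$ is a non-zero divisor modulo $I_\Pc$, then no associated prime of $S/I_\Pc$ contains any $x_\ab$, hence (each such prime being prime) none contains the product $w = \prod_{\ab \in V(\Pc)} x_\ab$, so $w$ is a non-zero divisor modulo $I_\Pc$ as well. Localizing at the powers of $w$ then gives a nonzero ring $W^{-1}K[\Pc] \cong A/I_\Pc A$, where $A = K[\, x_\ab^{\pm 1} : \ab \in V(\Pc)\,]$ is the Laurent polynomial ring in $|V(\Pc)|$ variables. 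Since passing to a localization cannot increase Krull dimension, $\dim K[\Pc] \geq \dim A/I_\Pc A$; and since $A$ is a regular affine domain over $K$ of dimension $|V(\Pc)|$, hence equidimensional and catenary, one has $\dim A/I_\Pc A = |V(\Pc)| - \height (I_\Pc A)$. Thus everything reduces to proving $\height (I_\Pc A) \leq |\Pc|$, for which it is enough to generate $I_\Pc A$ by $|\Pc|$ elements.

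The point of passing to $A$ is that the generators of $I_\Pc$ may there be rescaled by units. For an inner interval $[\ab,\bb]$ with anti-diagonal corners $\cb,\db$ put $m_{[\ab,\bb]} = x_\ab x_\bb x_\cb^{-1} x_\db^{-1} \in A$, so that $f_{\ab,\bb} = x_\cb x_\db\,(m_{[\ab,\bb]} - 1)$ and consequently $I_\Pc A$ is generated by the elements $m_{[\ab,\bb]} - 1$ as $[\ab,\bb]$ ranges over the inner intervals of $\Pc$. The crucial combinatorial step is the ``telescoping'' (discrete Stokes) identity
\[
m_{[\ab,\bb]} \;=\; \prod_{C}\, m_C ,
\]
the product being over the cells $C$ of $[\ab,\bb]$, all of which lie in $\Pc$ because $[\ab,\bb]$ is an inner interval. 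I would prove this by comparing, for each lattice point $(p,q)$ of $[\ab,\bb]$, the exponent of $x_{(p,q)}$ on the two sides: an interior lattice point of $[\ab,\bb]$ is a vertex of four cells of $[\ab,\bb]$, occurring twice as a diagonal corner and twice as an anti-diagonal corner, so it contributes $0$ on the right; a lattice point on an edge of $[\ab,\bb]$ that is not a corner is a diagonal corner of exactly one of these cells and an anti-diagonal corner of exactly one other, again contributing $0$; and each of the four corners $\ab,\bb,\cb,\db$ of $[\ab,\bb]$ lies in exactly one cell of $[\ab,\bb]$, with the expected sign. (Alternatively one may induct on the number of cells of $[\ab,\bb]$, peeling off one row or column of cells at a time.) Granting the identity, $m_{[\ab,\bb]} - 1 = \prod_{C} m_C - 1$ lies in the ideal generated by the $|\Pc|$ elements $m_C - 1$, $C \in \Pc$, because $\prod_i u_i - 1 = \sum_i (u_i - 1)\prod_{j<i} u_j$. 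Hence $I_\Pc A = (\, m_C - 1 : C \in \Pc\,)A$ is generated by $|\Pc|$ elements and $\height (I_\Pc A) \leq |\Pc|$.

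Combining the two parts gives $\dim K[\Pc] \geq \dim A/I_\Pc A \geq |V(\Pc)| - |\Pc|$, that is, $\height I_\Pc \leq |\Pc|$. For the ``in particular'' clause, when $I_\Pc$ is prime the ring $K[\Pc]$ is a domain, and since $I_\Pc$ is generated in degree $2$ it contains no $x_\ab$; thus each $x_\ab$ is a nonzero element of a domain, hence a non-zero divisor, so the hypothesis of the theorem is automatically fulfilled. The step I expect to be the real work is the telescoping identity: keeping track of the signs and multiplicities at the boundary lattice points of an arbitrary inner interval must be done with care (or replaced by a clean induction), whereas the localization argument, the dimension count in $A$, and the appeal to Krull's height theorem are all routine.
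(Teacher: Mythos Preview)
Your proof is correct and follows the same overall strategy as the paper: localize at the product of all the variables, then show that in the localization the ideal $I_\Pc$ is generated by the $|\Pc|$ inner $2$-minors corresponding to the cells of $\Pc$, whence Krull's height theorem gives the bound. The paper phrases the localization step directly as $\height I_\Pc = \height I_\Pc S_{\xb}$ (since no minimal prime of $I_\Pc$ contains $\xb$), whereas you go through $\dim K[\Pc] \geq \dim A/I_\Pc A = |V(\Pc)| - \height I_\Pc A$; these are equivalent.

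The one genuine difference is in the combinatorial core. The paper works additively with the minors themselves: it records the three-term syzygy
\[
x_{i_1j_2}\, f_{\cb,\bb} \;-\; x_{i_2j_2}\, f_{\ab,\bb} \;+\; x_{i_3j_2}\, f_{\ab,\db} \;=\; 0,
\]
uses it to express, in $S_{\xb}$, the minor of a large interval as a combination of the minors of the two subintervals obtained by splitting off a column (and similarly a row), and then inducts on the number of columns. You instead divide each $f_{\ab,\bb}$ by the unit $x_\cb x_\db$ to obtain $m_{[\ab,\bb]} - 1$, prove the multiplicative telescoping identity $m_{[\ab,\bb]} = \prod_{C \subset [\ab,\bb]} m_C$ by a vertex-by-vertex exponent count, and conclude via $\prod_i u_i - 1 \in (\{u_i - 1\})$. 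Your route is a bit more conceptual---it makes visible that $I_\Pc A$ is a lattice ideal and that the cell monomials $m_C$ generate the relevant sublattice---while the paper's route stays closer to explicit Pl\"ucker-type relations among the $f_{\ab,\bb}$ and is more in keeping with the syzygy-based viewpoint used elsewhere in the paper.
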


\begin{proof}
Let $S = K[x_{ij} : (i, j) \in V(\Pc)]$ and $\xb = \prod_{ (i, j) \in V(\Pc)} x_{ij} \in S$.  One claims that $I_\Pc S_\xb$ is generated by the $2$-minors corresponding to the cells of $\Pc$.  The choice of $\xb$ guarantees that $\height I_\Pc = \height I_\Pc S_\xb$.  The claim says that $I_\Pc S_\xb$ is generated by $|\Pc|$ elements, from which $\height I_\Pc S_\xb \leq |\Pc|$ follows.  Thus $\height I_\Pc \leq |\Pc|$, as desired.

It remains to proof the claim.  
Let $i_1 < i_2 < i_3$ and $j_1 < j_2$, and assume that $[(i_1, j_1), (i_3, j_2)]$ is an inner interval of $\Pc$. Then each of $[(i_1, j_1), (i_2, j_2)]$ and $[(i_2, j_1), (i_3, j_2)]$ is also an inner interval of $\Pc$.  Let $\ab = (i_1,j_1), \cb = (i_2,j_1), \db = (i_2,j_2)$ and $\bb = (i_3, j_2)$.  Then
\[
x_{i_1j_2} f_{\cb,\bb} - x_{i_2j_2} f_{\ab,\bb} + x_{i_3j_2} f_{\ab,\db} = 0.
\]
This equation shows that in $S_{\xb}$ the 2-minor $f_{\ab,\bb}$ of the big interval $[\ab, \bb]$ is a linear combination of the 2-minors $f_{\ab,\db}$ and $f_{\cb,\bb}$ of the smaller intervals $[\ab, \db]$ and $[\cb, \bb]$.


\begin{figure}[h]
\begin{center}
\includegraphics[scale=0.9]{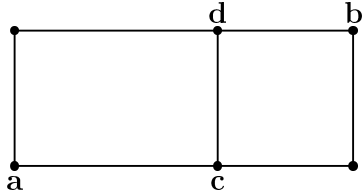}
\end{center}
\caption{Vertical splitting of an interval}
\label{Vertical_800x800_p1.eps}
\end{figure}


Now one proves the claim by showing that, for each inner interval $[\ab,\bb]$ of $\Pc$, the $2$-minor $f_{\ab,\bb}$ is in $S_\xb$ a linear combination of the $2$-minors corresponding to cells of $[\ab,\bb]$.  One proceeds by induction on the number of cell columns.  If there is only one column, then one proceeds by the length of this column. If this column consists of only one cell, then the job is done.  If the column consists of more than one column, then the column can be splitted into two shorter columns as indicated in Figure~\ref{horizontal}.

\begin{figure}[h]
\begin{center}
\includegraphics[scale=0.9]{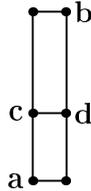}
\end{center}
\caption{Horizontal splitting of an interval}
\label{horizontal}
\end{figure}

Then similarly as in the vertical case, in $S_\xb$ the 2-minor of the whole column is a linear combination of the $2$-minors of the two shorter columns.  The induction hypothesis guarantees that in $S_\xb$ each of the two shorter columns is a linear combination of the $2$-minors of their inner cells.  It then follows that in $S_\xb$ the $2$-minor of the whole column is a linear combination of the $2$-minors of its cells.

Now suppose that $[\ab,\bb]$ consists of more than one column. Then one splits $[\ab,\bb]$ as shown in Figure $5$ and applies the similar induction argument as was done for the first column of $[\ab,\bb]$, in order to deduce that, in $S_\xb$, the inner $2$-minor $f_{\ab,\bb}$ is a linear combination of the 2-minors corresponding to the cells of $[\ab,\bb]$.
\end{proof}

Recall from \cite{QSS} the following definitions and facts: let $\Pc$ be a polyomino.  An interval $[\ab, \bb]$ of $\Pc$ with $\ab = (i, j)$ and $\bb = (k, \ell)$ is called a {\em horizontal edge interval} of $\Pc$ if $j = \ell$ and if each $\{(r,j),(r+1,j)\}$ for $i \leq r < k$ is an edge of a cell of $\Pc$. If a horizontal edge interval of $\Pc$ is not strictly contained in any other horizontal edge interval of $\Pc$, then it is called maximal. Similarly one defines {\em vertical edge intervals} and maximal vertical edge intervals of $\Pc$.  Let $h(\Pc)$ denote the number of maximal horizontal edge intervals of $\Pc$ and $v(\Pc)$ the number of maximal vertical edge intervals of $\Pc$.  In \cite[Theorem 2.2]{QSS} it is shown that if $\Pc$ is a simple polyomino (\cite[p.~282]{Q}) , then $K[\Pc]$ is isomorphic to the edge ring $K[G(\Pc)]$, where $G(\Pc)$ is the bipartite graph with vertex decomposition $V(G(\Pc)) = V_1 \cup V_2$ with $|V_1| = h(\Pc)$ and $|V_2| = v(\Pc)$, and $|E(G(P))| = |V(\Pc)|$.  This implies in particular, that $\Pc$ is a prime polyomino, i.e., $K[\Pc]$ is an integral domain.

\begin{Corollary}
\label{Regensburg}
Let $P$ be a simple polyomino. Then $$\height I_\Pc = |V(\Pc)| - (h(\Pc)+v(\Pc) - 1).$$
\end{Corollary}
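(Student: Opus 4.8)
The plan is to reduce the computation of $\height I_\Pc$ to a standard dimension formula for edge rings of bipartite graphs, via the isomorphism recalled just before the statement. Since $\Pc$ is simple, \cite[Theorem 2.2]{QSS} tells us that $\Pc$ is a prime polyomino, so $I_\Pc$ is a prime ideal of $S = K[\{x_\ab\}_{\ab \in V(\Pc)}]$. As $S$ is an affine domain over $K$, it is catenary and equidimensional, whence for the prime $I_\Pc$ one has
\[
\height I_\Pc = \dim S - \dim S/I_\Pc = |V(\Pc)| - \dim K[\Pc].
\]
So it suffices to prove that $\dim K[\Pc] = h(\Pc) + v(\Pc) - 1$.

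Next I would invoke the isomorphism $K[\Pc] \cong K[G(\Pc)]$ of \cite[Theorem 2.2]{QSS}, where $G(\Pc)$ is the bipartite graph with $V(G(\Pc)) = V_1 \cup V_2$, $|V_1| = h(\Pc)$, $|V_2| = v(\Pc)$ and $|E(G(\Pc))| = |V(\Pc)|$. Here I would record the (easy) observation that $G(\Pc)$ is a \emph{connected} graph: given any two cells of $\Pc$, a chain of cells joining them in $\Pc$, together with the unique maximal horizontal and vertical edge intervals through each cell, produces a walk in $G(\Pc)$ connecting the corresponding vertices; since every vertex of $G(\Pc)$ lies on some cell, $G(\Pc)$ is connected. (This is implicit in the construction of \cite{QSS}.)

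Finally I would apply the classical description of the Krull dimension of an edge ring. For a connected graph $G$ on $m$ vertices, $\dim K[G] = m$ if $G$ contains an odd cycle and $\dim K[G] = m-1$ if $G$ is bipartite; concretely, $K[G]$ is the semigroup ring of the affine semigroup generated by the vectors $e_H + e_V$ over the edges $\{H,V\}$ of $G$, whose dimension equals the rank of the lattice they span — and for a connected bipartite graph this rank is $m-1$, since all these vectors lie in the hyperplane $\sum_{h \in V_1} z_h = \sum_{v \in V_2} z_v$ while the $m-1$ edge vectors of a spanning tree are linearly independent. Applying this to $G = G(\Pc)$, connected bipartite on $m = h(\Pc)+v(\Pc)$ vertices, yields $\dim K[\Pc] = h(\Pc)+v(\Pc) - 1$, and substituting into the displayed identity finishes the proof. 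There is no genuine obstacle here: the only points demanding care are confirming that $I_\Pc$ is prime (so that the height–dimension identity is exact) and that $G(\Pc)$ is connected (so that the bipartite formula contributes the constant $1$ rather than the number of bipartite components). It is worth noting that, since the resulting value coincides with $|\Pc|$ for simple polyominoes, this refines the bound of Theorem~\ref{hotel_herzog} to an equality in this case.
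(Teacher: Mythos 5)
Your proposal is correct and follows essentially the same route as the paper: use primality of $I_\Pc$ (from \cite[Theorem 2.2]{QSS}) to write the height as $|V(\Pc)| - \dim K[\Pc]$, transfer to the edge ring $K[G(\Pc)]$, and apply the bipartite dimension formula from \cite{OH}. Your additional check that $G(\Pc)$ is connected is a worthwhile point of care that the paper leaves implicit, but it does not change the argument.
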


\begin{proof}

It is known \cite{OH} that $\dim K[G] = n - 1$ for a bipartite graph with $n$ vertices.  Applied to our case it follows that
\begin{eqnarray*}
\height I_\Pc & = & \embdim K[\Pc] - \dim K[\Pc] \\
& = & \embdim K[G(\Pc)] - \dim K[G(\Pc)] \\
& = & |V(\Pc)| - (h(\Pc) + v(\Pc) - 1),
\end{eqnarray*}
as desired.
\end{proof}

Together with Theorem \ref{hotel_herzog} one can now obtain

\begin{Corollary}
\label{Frankfurt}
Let $\Pc$ be a simple polyomino and suppose that there exist $|\Pc|$ inner $2$-minors, whose initial monomials form a regular sequence. Then $\Pc$ is of K\"onig type and
\[
|\Pc| = |V(\Pc)| - (h(\Pc) + v(\Pc) - 1).
\]
\end{Corollary}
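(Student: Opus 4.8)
The plan is to determine $\height I_\Pc$ exactly --- it will turn out to equal $|\Pc|$ --- then to read the displayed identity off Corollary~\ref{Regensburg}, and finally to verify that the inner $2$-minors supplied by the hypothesis meet the defining requirements of being of K\"onig type.

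For the height I would sandwich $\height I_\Pc$ between $|\Pc|$ and $|\Pc|$. For the upper bound: since $\Pc$ is simple, $I_\Pc$ is prime (via the isomorphism $K[\Pc]\cong K[G(\Pc)]$ recalled above), so the last assertion of Theorem~\ref{hotel_herzog} gives $\height I_\Pc\le |\Pc|$. For the lower bound: by hypothesis there are a monomial order $<$ on $S$ and inner $2$-minors $g_1,\ldots,g_{|\Pc|}$ of $\Pc$ with $\ini_<(g_1),\ldots,\ini_<(g_{|\Pc|})$ a regular sequence; since $S$ is Cohen--Macaulay, the ideal $(\ini_<(g_1),\ldots,\ini_<(g_{|\Pc|}))$ has height $|\Pc|$, and as it is contained in $\ini_<(I_\Pc)$ --- whose height equals that of $I_\Pc$ --- we obtain $\height I_\Pc\ge |\Pc|$. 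Hence $\height I_\Pc=|\Pc|$, and Corollary~\ref{Regensburg} yields $|\Pc|=|V(\Pc)|-(h(\Pc)+v(\Pc)-1)$.

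It remains to check that $\Pc$ is of K\"onig type. Since $\height I_\Pc=|\Pc|$ and the $\ini_<(g_i)$ already form a regular sequence, it is enough to show that $g_1,\ldots,g_{|\Pc|}$ form part of a minimal system of homogeneous generators of $I_\Pc$. Because $I_\Pc$ is generated in degree $2$, its product with $\mm$ has vanishing degree-$2$ part, so the degree-$2$ piece of $I_\Pc/\mm I_\Pc$ is just $(I_\Pc)_2$; thus the claim reduces to the $K$-linear independence of $g_1,\ldots,g_{|\Pc|}$ as degree-$2$ polynomials. The members of a regular sequence of monomials in a polynomial ring over a field are pairwise coprime, so the monomials $\ini_<(g_1),\ldots,\ini_<(g_{|\Pc|})$ are pairwise distinct; writing each $g_i$ as the difference of its two monomials and examining a hypothetical relation $\sum_i c_i g_i=0$, the $<$-largest monomial $\ini_<(g_{i_0})$ occurring with $c_{i_0}\neq 0$ cannot be cancelled by any other term (each competing monomial $\ini_<(g_j)$ is distinct from it, and each trailing monomial is strictly smaller than the corresponding leading monomial), forcing $c_{i_0}=0$; iterating forces all $c_i=0$. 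Hence $g_1,\ldots,g_{|\Pc|}$ extend to a minimal homogeneous generating set of $I_\Pc$, and $\Pc$ is of K\"onig type with respect to this sequence and $<$.

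All the real weight here is carried by the two imported results, Theorem~\ref{hotel_herzog} (for the bound $\height I_\Pc\le |\Pc|$) and Corollary~\ref{Regensburg} (for the dimension count); once $\height I_\Pc=|\Pc|$ is in hand, the rest is formal. The one place calling for a little care is the verification that the prescribed inner $2$-minors may serve as part of a minimal generating system, and the point there is simply that the initial monomials of a regular sequence have pairwise disjoint supports, which rules out any cancellation in degree $2$. I expect this modest bookkeeping, rather than any substantive difficulty, to be the main thing to get right.
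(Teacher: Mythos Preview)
Your argument is correct and follows exactly the route the paper indicates: the paper merely writes ``Together with Theorem~\ref{hotel_herzog} one can now obtain'' after Corollary~\ref{Regensburg}, and you have supplied precisely the details this sentence suppresses (the lower bound on $\height I_\Pc$ via the regular sequence of initial monomials, and the verification that the chosen inner $2$-minors lie in a minimal generating set). Nothing further is needed.
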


Let $\Pc$ is a polyomino and $C = [\ab, \bb]$ a cell of $\Pc$ for which one of the vertices of $C$ is a boundary vertex of $\Pc$.  Let $\ab = (i, j)$ and $\bb = (i+1, j+1)$.  Let $\cb = (i, j+1)$ and $\db = (i+1, j)$ be the anti-diagonal corners of $[\ab, \bb]$.  Let, say, $\db$ and $\bb$ be boundary vertices of $\Pc$.  Let $\Pc'$ denote the new polyomino which is obtained by adding the new cell $[\db, \eb]$ to $\Pc$, where $\eb = (i+2, j+1)$.  It then follows that $\height I_{\Pc'} = \height I_\Pc + 1$.  Now, suppose that 
there is a sequence $f_{\ab_1,\bb_1}, \ldots, f_{\ab_{h},\bb_{h}} = f_{\ab,\bb}$ together with a lexicographic order $<$ on $S$ for which ${\rm in}_<(f_{\ab_1,\bb_1}), \ldots, {\rm in}_<(f_{\ab_h,\bb_h})$ form a regular sequence.


\begin{figure}[h]
\begin{center}
\includegraphics[scale=0.9]{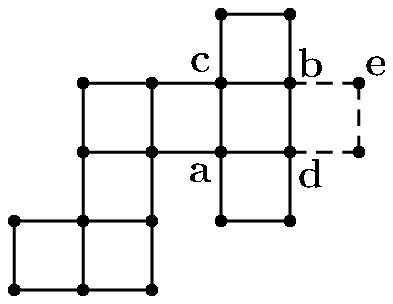}
\end{center}
\caption{Polyominoes $\Pc$ and $\Pc'$}
\label{prime}
\end{figure}


\begin{Lemma}
\label{Ittenbach}
Following the above situation and suppose that ${\rm in}_<(f_{\ab,\bb}) = x_\ab x_\bb$.  Then there is a lexicographic order $<'$ on $S[x_{\db'}, x_\eb]$, where $\db' = (i+2, j)$, for which
\[
{\rm in}_{<'}(f_{\ab_1,\bb_1}), \ldots, {\rm in}_{<'}(f_{\ab_{h-1},\bb_{h-1}}), x_\ab x_\eb = {\rm in}_{<'}(f_{\ab,\eb}), x_\bb x_{\db'} = {\rm in}_{<'}(f_{\db, \eb})
\]
form a regular sequence.
\end{Lemma}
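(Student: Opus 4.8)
The plan is to construct $<'$ by retaining the total order on the variables of $S$ that underlies $<$ and inserting the two new variables $x_{\db'}$ and $x_\eb$ into suitable positions, and then to verify the regular sequence property via the elementary fact that a sequence of monomials in a polynomial ring forms a regular sequence if and only if the monomials are pairwise relatively prime. I would begin by recording two observations. First, for a lexicographic order and a binomial $x_px_q-x_rx_s$ with $p,q,r,s$ pairwise distinct, one has ${\rm in}_<(x_px_q-x_rx_s)=x_px_q$ exactly when the $<$-largest of $x_p,x_q,x_r,x_s$ lies in $\{x_p,x_q\}$. Applied to the hypothesis ${\rm in}_<(f_{\ab,\bb})=x_\ab x_\bb$ this shows that one of $x_\ab,x_\bb$ is the $<$-largest among $x_\ab,x_\bb,x_\cb,x_\db$; I would call that variable $y$. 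Second, since ${\rm in}_<(f_{\ab_1,\bb_1}),\ldots,{\rm in}_<(f_{\ab_h,\bb_h})$ is a regular sequence of monomials with ${\rm in}_<(f_{\ab_h,\bb_h})={\rm in}_<(f_{\ab,\bb})=x_\ab x_\bb$, these monomials are pairwise relatively prime, so each ${\rm in}_<(f_{\ab_k,\bb_k})$ with $k<h$ is divisible by neither $x_\ab$ nor $x_\bb$.

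Next I would define $<'$ by a case distinction on $y$, keeping in mind that $f_{\ab,\eb}=x_\ab x_\eb-x_\cb x_{\db'}$ and $f_{\db,\eb}=x_\db x_\eb-x_\bb x_{\db'}$. If $y=x_\ab$, let $<'$ be the lexicographic order on $S'=S[x_{\db'},x_\eb]$ whose variable order restricts to that of $<$ on the variables of $S$, with $x_{\db'}$ inserted immediately below $x_\ab$ and $x_\eb$ inserted as the smallest variable. Then $x_{\db'}$ becomes $<'$-larger than every variable of $S$ other than $x_\ab$, in particular larger than $x_\bb$ and $x_\db$, while $x_\ab$ remains $<'$-larger than each of $x_\eb,x_\cb,x_{\db'}$; by the first observation, ${\rm in}_{<'}(f_{\ab,\eb})=x_\ab x_\eb$ and ${\rm in}_{<'}(f_{\db,\eb})=x_\bb x_{\db'}$. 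If $y=x_\bb$, I would instead insert $x_\eb$ immediately below $x_\bb$ and then $x_{\db'}$ immediately below $x_\eb$; then $x_\eb$ becomes $<'$-larger than every variable of $S$ other than $x_\bb$, hence dominates $x_\ab,x_\cb,x_{\db'}$, while $x_\bb$ still dominates $x_\db,x_\eb,x_{\db'}$, and the first observation again yields ${\rm in}_{<'}(f_{\ab,\eb})=x_\ab x_\eb$ and ${\rm in}_{<'}(f_{\db,\eb})=x_\bb x_{\db'}$. In either case $<'$ restricts to $<$ on the variables of $S$, and since every $f_{\ab_k,\bb_k}$ with $k<h$ involves only variables of $S$, one has ${\rm in}_{<'}(f_{\ab_k,\bb_k})={\rm in}_<(f_{\ab_k,\bb_k})$.

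Finally I would check that
\[
{\rm in}_{<'}(f_{\ab_1,\bb_1}),\ \ldots,\ {\rm in}_{<'}(f_{\ab_{h-1},\bb_{h-1}}),\ x_\ab x_\eb,\ x_\bb x_{\db'}
\]
are pairwise relatively prime, which by the monomial criterion shows that they form a regular sequence in $S'$. The monomials ${\rm in}_{<'}(f_{\ab_k,\bb_k})={\rm in}_<(f_{\ab_k,\bb_k})$ for $1\le k\le h-1$ are pairwise relatively prime because the original sequence was; each of them involves neither $x_\ab$ nor $x_\bb$ by the second observation, and neither of the new variables $x_{\db'},x_\eb$, hence is relatively prime to both $x_\ab x_\eb$ and $x_\bb x_{\db'}$; and $x_\ab x_\eb$ and $x_\bb x_{\db'}$ share no variable. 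I expect the only genuine obstacle to be arranging the two new variables so that both new initial monomials come out as prescribed without disturbing the old ones; this is precisely what the case split on $y$ accomplishes, the remainder being the coprimality bookkeeping above.
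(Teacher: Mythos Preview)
Your proof is correct and mirrors exactly what the paper intends: the paper's own proof is just the sentence ``imitating the proofs of Lemmata~\ref{Nightingale} and~\ref{Nightingale*}, one can prove Lemma~\ref{Ittenbach} easily,'' and your case split on whether $x_\ab$ or $x_\bb$ is the $<$-largest corner variable, with the two new variables inserted adjacent to that one, is precisely that imitation (your coprimality check for the regular-sequence condition is a welcome detail the paper omits). One small wording slip: after inserting $x_{\db'}$ immediately below $x_\ab$ (and similarly in the other case), $x_{\db'}$ is only $<'$-larger than the variables of $S$ that were already below $x_\ab$, not ``every variable of $S$ other than $x_\ab$''; however, since $x_\bb,x_\cb,x_\db$ are among those by the hypothesis $y=x_\ab$, your conclusions about ${\rm in}_{<'}(f_{\ab,\eb})$ and ${\rm in}_{<'}(f_{\db,\eb})$ are unaffected.
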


Imitating the proofs   of the  Lemmata \ref{Nightingale} and \ref{Nightingale*}, one can prove Lemma \ref{Ittenbach} easily.  Furthermore, slight modifications of Lemma \ref{Ittenbach}, for example, when $\ab$ and $\db$ 
are boundary vertices of $\Pc$ and ${\rm in}_<(f_{\ab,\bb}) = x_\cb x_\db$, can be valid.

A vertex $\ab$ of a polyomino $\Pc$ is called {\em free} if $\ab$ belongs to exactly one cell of $\Pc$.  A cell $C$ of a polyomino $\Pc$ is called a {\em leaf} of $\Pc$ if two of the vertices of $C$ are free.  A simple polyomino $\Pc$ is called a {\em tree} if $\Pc$ possesses a leaf and if no inner interval of $\Pc$ is of the form $[\ab, \ab + (2,2)]$.  It follows from repeated application of Lemma \ref{Ittenbach} that

\begin{Corollary}
\label{Ittenbach_tree}
Every tree $\Pc$ is of K\"onig type with $\height I_\Pc = |\Pc|$.
\end{Corollary}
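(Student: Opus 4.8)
The plan is to argue by induction on $|\Pc|$, and to use Corollary~\ref{Frankfurt} to reduce the statement to one assertion: that there exist $|\Pc|$ inner $2$-minors of $\Pc$ whose initial monomials, for a suitable lexicographic order $<$, form a regular sequence (equivalently, for monomials in a polynomial ring, are pairwise coprime). Once such a list is produced, Corollary~\ref{Frankfurt} gives that $\Pc$ is of K\"onig type, and together with Corollary~\ref{Regensburg} it gives $\height I_\Pc=|\Pc|$. The base case $|\Pc|=1$ is immediate: for a single cell $C=[\ab,\bb]$ the ideal $I_\Pc=(f_{\ab,\bb})$ is principal, and under any lexicographic order ${\rm in}_<(f_{\ab,\bb})$ is a single nonzero monomial, which by itself is a regular sequence of length $1=|\Pc|$.

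For the inductive step the first ingredient is structural. When $\Pc$ is a tree, its cell-adjacency graph $G_\Pc$ --- with the cells of $\Pc$ as vertices and an edge whenever two cells share an edge of $\NN^2$ --- is a tree in the usual graph-theoretic sense: a cycle in $G_\Pc$ would enclose either a hole, which is impossible since $\Pc$ is simple, or a $2\times 2$ block of cells, i.e.\ an inner interval $[\ab,\ab+(2,2)]$, which is forbidden by the definition of a tree. Hence $G_\Pc$ has a leaf, so (for $|\Pc|\ge 2$) $\Pc$ has a leaf cell $C'$ sharing an edge with exactly one other cell $C$ of $\Pc$. Deleting $C'$ yields a polyomino $\Pc_0=\Pc\setminus\{C'\}$ that is again simple, still has no inner interval $[\ab,\ab+(2,2)]$, and still possesses a leaf cell, hence is again a tree, now with $|\Pc|-1$ cells; moreover the edge of $C$ along which $C'$ was attached is a free edge of $\Pc_0$ (its endpoints lie on at most three cells of $\Pc_0$ and are therefore boundary vertices of $\Pc_0$). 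This is exactly the configuration appearing just before Lemma~\ref{Ittenbach}. Applying the induction hypothesis to $\Pc_0$ and then Lemma~\ref{Ittenbach} --- or the appropriate one of its dihedral / ``slight modification'' variants, chosen according to the side of $C$ on which $C'$ sits and according to which term of the relevant $2$-minor is its initial monomial --- converts a K\"onig witness for $\Pc_0$ into one for $\Pc$, raising both the length of the witness and the number $\height I_\Pc$ by exactly one (the latter by the identity $\height I_{\Pc'}=\height I_\Pc+1$ recorded before Lemma~\ref{Ittenbach}). With the inductive equality $\height I_{\Pc_0}=|\Pc_0|$ this yields $\height I_\Pc=|\Pc|$ together with a K\"onig witness of length $|\Pc|$, and the reduction above finishes the proof.

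The delicate point --- and the main obstacle --- is that Lemma~\ref{Ittenbach} as stated requires the current K\"onig witness to end in precisely $f_C$, the $2$-minor of the cell receiving the new cell, with its initial monomial in a prescribed position (the diagonal of $C$ through the vertex shared with the new cell). A general tree polyomino admits no build-up in which each newly attached cell meets its immediate predecessor --- the ``T''-shaped tree of four cells, whose cell-adjacency graph is $K_{1,3}$, already has no such ordering --- so the inductive statement has to be strengthened to keep track of which $2$-minors may occur last in a K\"onig witness and with which initial monomials, and one must verify that the strengthened statement reproduces itself under the build-up. Concretely, one carries along the assertion that for each leaf cell of $\Pc$ and each choice of one of its two free vertices $u$ there is a K\"onig witness whose last $2$-minor is that cell's, with initial monomial $x_u x_w$ where $w$ is the vertex of the cell opposite $u$; and when the cell $C$ receiving the new cell is not itself a leaf of $\Pc_0$, one invokes a further modification of Lemma~\ref{Ittenbach} that attaches the new cell to a cell whose $2$-minor lies in the interior of the witness, replaces that $2$-minor by the $2$-minor of the resulting $1\times 2$ inner interval, and appends the $2$-minor of the new cell --- always choosing the lexicographic order so that the two newly created variables, together with the single ``old'' vertex that is used, keep the whole list of initial monomials pairwise coprime. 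Establishing that these local modifications are available and that the bookkeeping of free vertices and initial monomials closes up is the substance of the argument; everything else is formal.
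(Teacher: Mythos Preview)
Your approach is correct and essentially the same as the paper's: induction on $|\Pc|$ via repeated application of Lemma~\ref{Ittenbach} and its variants (the paper's proof is in fact just that single clause). Your careful discussion of the bookkeeping subtlety --- that the receiving cell's $2$-minor must appear in the witness with the right initial term, and that this forces a strengthened induction hypothesis --- makes explicit what the paper leaves tacit under the phrase ``slight modifications of Lemma~\ref{Ittenbach}''.
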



\begin{figure}[h]
\begin{center}
\includegraphics[scale=0.9]{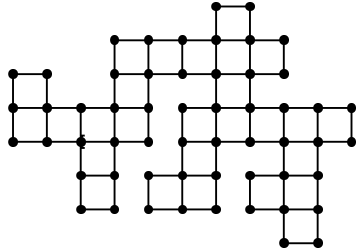}
\end{center}
\caption{A tree polyomino}
\label{tree}
\end{figure}


\begin{Example}
\label{cycle}
{\em
The polyomino $\Pc$ of Figure $9$ is of K\"onig type with respect to the sequence $f_{1,1'}, \ldots, f_{16, 16'}$ and the lexicographic order $<$ on $S$ induced by the ordering
$$
1 > \cdots > 16 > 1' > \cdots > 16'.
$$

}
\end{Example}

\begin{figure}[h]
\begin{center}
\includegraphics[scale=0.9]{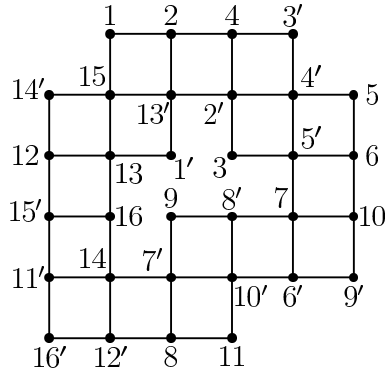}
\end{center}
\caption{A cycle polyomino}
\label{prime}
\end{figure}

It would be of interest to classify the polyominoes which are of K\"onig type.



{}

\end{document}